\newtheorem{theorem}{Theorem}[section]
\newtheorem{lemma}[theorem]{Lemma}
\newtheorem{cor}[theorem]{Corollary}
\newtheorem{prop}[theorem]{Proposition}
\newtheorem{remark}{Remark}
\subjclass[2000]{11P32, 11P55, (11N36)}
\newcommand{\Addresses}{{
  \bigskip
  \bigskip
  \footnotesize

  \textsc{MATHEMATISCH INSTITUUT, UNIVERSITEIT UTRECHT, POSTBUS 80.010, \\ 3508 TA UTRECHT, NEDERLAND}\par\nopagebreak
  \textit{E-mail}:  \texttt{l.p.grimmelt@uu.nl}

}}
\title{Goldbach Numbers in Short Intervals }
\date{}
\author{Lasse Grimmelt }
\begin{document}
\begin{abstract}
We decrease the length of the shortest interval for which almost all even integers in it are the sum of two primes. This is achieved by applying a new perspective of the circle method which builds on introducing a non-negative model for minorants. Compared to Harman's previous record, our approach uses minorants more efficiently and not only allows a shorter interval but also simplifies the treatment. 
\end{abstract}
\maketitle

\section{Introduction}
We believe that every even integer greater than $2$ is the sum of two primes, a Goldbach number. In spite of the fact that this conjecture itself has been open for over $250$ years, different approximations have been proved. Let $E(X)$ be the set of even integers up to $X$ that is \emph{not} the sum of two primes
\begin{align*}
E(X)=\{n\leq X: 2|n \text{ and }n\neq p_1+p_2 \text{ for all }p_1,p_2\in\mathbb{P}\}.
\end{align*}
Advancing the technique of Montgomery and Vaughan \cite{Imv}, Lu has shown \cite{lu} that
\begin{align*}
|E(X)|\ll X^{0.879}.
\end{align*}
We consider here the problem in short intervals and define
\begin{align*}
E(X,H)=\{n\in [X-H,X]: 2|n \text{ and }n\neq p_1+p_2 \text{ for all }p_1,p_2\in\mathbb{P}\}.
\end{align*}
Building up on the works of Jia, Li, Mikawa, Perelli, Pintz, and Ramachandra, Harman showed in chapter 10 of \cite{harb} that for any $\epsilon>0$ and $A>0$ in the range $H>X^{11/180+\epsilon}$ we have
\begin{align}\label{EHbound}
|E(X,H)|&\ll_{\epsilon,A} H(\log X)^{-A}.
\end{align}
The main result of this paper is the following theorem.
\begin{theorem}\label{MT}
Let $\epsilon,A>0$. The estimate \eqref{EHbound} holds for $H>X^{7/120+\epsilon}$.
\end{theorem}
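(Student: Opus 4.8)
The plan is to run a version of the circle method for the binary Goldbach problem in which each of the two prime factors is replaced by a lower–bound sieve weight, using the elementary device that takes the place of the vector sieve. Fix intervals $\mathcal I_1,\mathcal I_2$ of length $\asymp X$ with $\mathcal I_1+\mathcal I_2\supseteq(X-H,X]$, put $u_i(m)=(\log m)\mathbf 1_{\mathbb P\cap\mathcal I_i}(m)$ and $R(n)=(u_1*u_2)(n)$, so that $R(n)>0$ forces $n$ to be a Goldbach number. If $\rho_1,\rho_2$ are real sequences with $0\le\rho_1\le u_1$ and $\rho_2\le u_2$ — only $\rho_1$ is required to be nonnegative — then for every $n$
\[
R(n)-(\rho_1*\rho_2)(n)=(u_1-\rho_1)*u_2+\rho_1*(u_2-\rho_2)\ \ge\ 0,
\]
each summand being a convolution of two nonnegative sequences. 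Hence it suffices to prove $(\rho_1*\rho_2)(n)>0$ for almost all even $n\in(X-H,X]$; since only $\rho_1$ is constrained to be nonnegative, one may take $\rho_2$ to be a sharp Harman–type sieve minorant (its sign is irrelevant) while $\rho_1$ is only a crude nonnegative one, and the major arcs will only have to produce a \emph{positive} main term, not one close to the truth — which is why neither a majorant nor a density hypothesis on the minorants is needed. The nonnegative model enters here: for the major–arc analysis $\rho_1$ is compared to a transparent nonnegative sequence $\nu_1\ge0$ (with $\widehat{\nu_1}$ explicit) for which $\sum_m\rho_1(m)g(m)=\sum_m\nu_1(m)g(m)+(\text{negligible})$ for all Type I and Type II test functions $g$ up to a fixed level, so that $\widehat{\rho_1}=\widehat{\nu_1}+\widehat{\rho_1-\nu_1}$ with $\widehat{\nu_1}$ giving a manifestly positive contribution on the major arcs and $\widehat{\rho_1-\nu_1}$ contributing only to the minor arcs.

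Concretely, I would first reduce Theorem~\ref{MT} to the second–moment bound
\[
\sum_{\substack{X-H<n\le X\\ n\ \mathrm{even}}}\bigl|(\rho_1*\rho_2)(n)-\mathfrak M(n)\bigr|^2\ \ll_{A,\epsilon}\ X^2H(\log X)^{-A},
\]
where $\mathfrak M(n)$ is the major–arc main term, which by the previous paragraph equals $c\,\mathfrak S(n)\,(\mathbf 1_{\mathcal I_1}*\mathbf 1_{\mathcal I_2})(n)$ plus an acceptable error, for a positive constant $c$ depending only on the two sieve densities, and therefore $\mathfrak M(n)\gg X(\log X)^{-O(1)}$ for even $n$. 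Granting this, Chebyshev's inequality gives $(\rho_1*\rho_2)(n)\ge\tfrac12\mathfrak M(n)>0$ — hence $n$ Goldbach — for all but $\ll_A H(\log X)^{-A}$ even $n$ in the interval. To prove the second–moment bound one writes $(\rho_1*\rho_2)(n)-\mathfrak M(n)=\int_{\mathfrak m}\widehat{\rho_1}(\alpha)\widehat{\rho_2}(\alpha)e(-n\alpha)\,d\alpha$, the integral over the minor arcs of a Hardy–Littlewood dissection $[0,1]=\mathfrak M\cup\mathfrak m$ with major arcs around fractions $a/q$, $q\le(\log X)^{B}$; on $\mathfrak M$ the contribution is $\mathfrak M(n)$ up to an error controlled by the prime number theorem in arithmetic progressions (Siegel–Walfisz) together with a zero–density estimate of Huxley type to handle the restriction of $\widehat{\nu_1}\widehat{\rho_2}$ to the short window of length $H$.

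Squaring and summing over $n\in(X-H,X]$ turns the minor–arc contribution into
\[
\int_{\mathfrak m}\!\int_{\mathfrak m}\widehat{\rho_1}\widehat{\rho_2}(\alpha)\,\overline{\widehat{\rho_1}\widehat{\rho_2}(\beta)}\,K(\alpha-\beta)\,d\alpha\,d\beta,\qquad |K(\gamma)|\le\min\!\bigl(H,\|\gamma\|^{-1}\bigr),
\]
and this is the crux and the step I expect to be the main obstacle. Estimating the kernel and applying Gallagher's lemma reduces matters to showing that, uniformly for $\alpha$ on the minor arcs, the twisted short sums $\sum_{x<m\le x+H}\rho_i(m)e(m\alpha)$ are — on average over $x\asymp X$, and after multiplying the $i=1$ and $i=2$ contributions — much smaller than the trivial bound. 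Here one feeds in the bilinear Buchstab decompositions of $\rho_1$ and $\rho_2$: each $\widehat{\rho_i}$ is a short combination of Type I sums $\sum_{d\le D}c_d\sum_{\ell}e(d\ell\alpha)$ and Type II sums $\sum_{m\sim M}\sum_{\ell\sim L}a_mb_\ell e(m\ell\alpha)$ with prescribed ranges, and the resulting products are estimated by the usual repertoire — van der Corput/Vaughan bounds for the Type I pieces, Cauchy–Schwarz and the large sieve for the Type II pieces, fourth–moment and large–values inequalities for the parts near rationals, and, for the pieces nearest the diagonal $\alpha\approx0$, Gallagher's lemma combined with Huxley-type estimates for primes in short intervals. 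The exponent $7/120$ is exactly the threshold at which, after optimising the Buchstab iteration defining $\rho_1,\rho_2$ and the placement of the bilinear ranges $M,L,D$, every resulting term is $\ll X^2H(\log X)^{-A}$; the binding constraints come from the intermediate ranges that are neither a pure Type I nor a clean Type II sum, and it is precisely the extra room gained by discarding the majorant and the density condition — so that $\rho_2$ may be taken as sharp as Harman's sieve allows while $\rho_1$ need only be a nonnegative skeleton — that lets these ranges be pushed past Harman's $11/180$.
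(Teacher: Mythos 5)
There is a genuine gap, and it sits exactly at the heart of your device.

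Your pointwise identity
\[
R-\rho_1*\rho_2=(u_1-\rho_1)*u_2+\rho_1*(u_2-\rho_2)\ \ge\ 0
\]
requires \emph{both} $0\le\rho_1$ \emph{and} $\rho_1\le u_1$. But $u_1$ is supported on primes, so on composites $0\le\rho_1\le u_1=0$ forces $\rho_1(n)=0$; hence any admissible $\rho_1$ is again supported on primes, and $\widehat{\rho_1}$ is again a prime exponential sum. The role of $\rho_1$ in your argument is therefore either trivial ($\rho_1=u_1$), in which case your second--moment bound requires minor--arc control of the prime exponential sum in windows of width $1/H$ with $H\approx X^{7/120}$, which is far beyond what any Type~I/Type~II machinery delivers; or $\rho_1$ is a proper sub-indicator of primes, which is even harder. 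A nonnegative lower bound sieve for the primes with usable Fourier structure is precisely the object that does not exist, which is the whole reason the vector sieve inequality was used in the first place. So "only $\rho_1$ is required to be nonnegative" does not actually relax anything.

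The paper's nonnegative object is \emph{not} a minorant at all. It is a scaled \emph{upper bound} $\beta$-sieve $\mathcal T_\nu^+$, which dominates the primes rather than being dominated by them, and its nonnegativity is used in a completely different place: not to derive $R(n)\ge(\rho_1*\rho_2)(n)$, but to replace $\Lambda'$ by a second (not necessarily nonnegative) minorant $\omega$ in the convolution $\Lambda'*\mathcal T_\nu^+\ge\omega*\mathcal T_\nu^+$. The chain is
\[
\Lambda'*\Lambda'\ \ge\ \Lambda'*\nu\ \approx\ \Lambda'*\mathcal T_\nu^+\ \ge\ \omega*\mathcal T_\nu^+\ \approx\ \omega*\mathcal T_\nu,
\]
where the two "$\approx$" are second-moment Fourier-closeness steps (Gallagher-type, as in Lemmas~\ref{shorttoLq} and \ref{sievetolambdaq}) applied only to the \emph{differences} $\nu-\mathcal T_\nu^+$ and $\mathcal T_\nu^+-\mathcal T_\nu$, not to $\Lambda'$, $\nu$ or $\omega$ individually; the $\ell^2$ norms of $\Lambda'$ and $\omega$ simply appear as multiplicative factors via Cauchy--Schwarz. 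As a consequence $\omega$ needs only long-range distribution in arithmetic progressions to moduli up to $(\log X)^A$ (condition \eqref{rhocondition}), not any short-interval minor-arc information, which is why its support can be pushed down to length $X^{21/40}$ via Baker--Harman--Pintz instead of the $X^{11/20}$ needed when one must beat a vector-sieve majorant with a density-close minorant.

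This also shows that your account of where $7/120$ comes from is off. The paper proves no new minor-arc or Dirichlet-polynomial estimates; the Buchstab decomposition, the Type~I/II ranges, and the one-variable short-interval exponent $1/9$ for $\nu$ are exactly Harman's. The only change is combinatorial: dropping the vector-sieve inequality \eqref{vsieveineq} removes the need for $\omega$ to be numerically dense, so its support length drops from $X^{11/20}$ to $X^{21/40}$, and $7/120=(21/40)\cdot(1/9)$ replaces $11/180=(11/20)\cdot(1/9)$. Your proposal, by contrast, commits you to improving the Type~I/II analysis for a nonnegative $\rho_1$ supported on primes in very short windows, which is not available and not what the theorem rests on.
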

Note that $11/180\approx 0.0611$ and $7/120\approx 0.0583$. The improvement is owing to the application of a version of the circle method that uses minorants more efficiently than Harman's approach. In particular, we prove no new results related to Dirichlet polynomials. From a technical point of view our strategy is quite simple. However, it uses a slightly different perspective of the circle method than previous applications of minorants. We motivate and explain it in detail in the remaining subsections of this introduction. 
	
\subsection*{Motivation and Further applications}

Even though we use neither Bohr sets nor the $W$-trick, our approach is motivated by the notion of pseudorandomness in the context of the transference circle method. See for example Green \cite{gre}, Green and Tao \cite{gt}, or Prendeville's survey \cite{0pre}. In particular - as in the transference principle - our main tool is the replacement of major and minor arc dissection by finding suitable model approximations in physical space that behaves nicely. In the transference circle method, the desired property of the model is being a bounded function, in this paper it is non-negativity. By skipping $W$-trick and Bohr sets, we achieve the same exceptional set bound \eqref{EHbound} as Harman.

The idea of a non-negative model as used in this paper can be applied to other problems in which a circle method that employs a point wise expential sum bounds and minorants. For example it allows for a simplified treatment of Vinogradov's Theorem with Chen primes, see Matomäki and Shao \cite{ms}. In similar spirit, this also gives a way to improve results on sums of four squares of almost twin primes, as considered by Tolev in \cite{to}.

\subsection*{Circle Method and Short Intervals}
For any functions $f_1, f_2: \mathbb{N}\to \mathbb{C}$ we denote throughout this paper their additive convolution by
\begin{align*}
f_1*f_2(n)=\sum_{n_1+n_2=n}f_1(n_1)f_2(n_2).
\end{align*}
In this notation we want to show that
\begin{align*}
\Lambda'*\Lambda'(n)>0
\end{align*}
for almost all even $n\in [X-H,X]$, where $\Lambda'$ is a weighted finitely supported prime indicator function
\begin{align*}
\Lambda'(n)=\begin{cases}\log n & \text{ if } n \text{ is prime and } n\leq X,\\
0 & \text{ else.}
\end{cases}
\end{align*}
We prove Theorem \ref{MT} by following a circle method approach based on Fourier analysis. For any arithmetic function $f:\mathbb{N}\to \mathbb{C}$ with finite support, we define the associated Fourier series $\hat{f}:\mathbb{T}\to \mathbb{C}$ by
\begin{align*}
\hat{f}(\alpha)=\sum_{n} f(n)e(\alpha n),
\end{align*}
where $e(z)=e^{2\pi \text{i} z}$. By orthogonality we can count the number of solutions we are interested in by
\begin{align*}
\Lambda'*\Lambda'(n)=\int_\mathbb{T} \widehat{\Lambda'}(\alpha)^2 e(-\alpha n)d\alpha.
\end{align*}
In the usual approaches, one now dissects $\mathbb{T}$ into major arcs $\mathfrak{M}$, minor arcs $\mathfrak{m}$ and applies different techniques on both parts. This is for example done in Harman's above mentioned result, see \cite{harb} chapter 10.3. The type of results that are used in his strategy are the same that we use to prove Theorem \ref{MT}. As mentioned before, our approach is based on a different way to interpret major and minor arc results. Instead of splitting up $\mathbb{T}$, we approximate the function we are interested in by a suitable model in physical space. This idea appears in similar form in chapter 19 of Iwaniec and Kowalski's book \cite{ik}.

Let us consider first what form this would take when bounding $E(X)$. Assume we have a model $\mathcal{T}_{\Lambda'}:\mathbb{N}\to \mathbb{R}$ that has the following two properties. First, it should approximate $\Lambda'$ on Fourier side by fulfilling for some $\theta>0$ the bound
\begin{align}\label{L'fcmodel}
\|\widehat{\Lambda'-\mathcal{T}_{\Lambda'}}\|_\infty \leq \theta X.
\end{align}
Second, we should be able to solve the counting problem after replacing one $\Lambda'$ by the model, i.e. understand
\begin{align*}
\Lambda'*\mathcal{T}_{\Lambda'}(n).
\end{align*}
We can use \eqref{L'fcmodel} to show that $\Lambda'*\Lambda'(n)$ is well approximated by $\Lambda'*\mathcal{T}_{\Lambda'}(n)$ for most $n\leq X$. Indeed, we have by the Bessel inequality
\begin{align*}
\sum_{n\leq X}\bigl|\Lambda'*\Lambda'(n)-\Lambda'*\mathcal{T}_{\Lambda'}(n)\bigr|^2&=\sum_{n\leq X}\bigl|\int_\mathbb{T}\widehat{\Lambda'}(\alpha)(\widehat{\Lambda'-\mathcal{T}_{\Lambda'}})(\alpha)e(-\alpha n)d\alpha\bigr|^2\\
&\leq \int_\mathbb{T}|\widehat{\Lambda'}(\alpha)|^2|\widehat{\Lambda'-\mathcal{T}_{\Lambda'}}(\alpha)|^2 d\alpha\\
&\leq \|\widehat{\Lambda'-\mathcal{T}_{\Lambda'}}\|_\infty^2 \|\Lambda'\|_2^2 \\
&\leq \theta^2 X^3 \log X.
\end{align*}
If $\theta=o((\log X)^{-1/2})$, this shows that for almost all $n\leq X$ we have
\begin{align*}
\Lambda'*\Lambda'(n)\approx \Lambda'*\mathcal{T}_{\Lambda'}(n).
\end{align*} 
Here the exact meaning of \emph{almost all} depends on the size of $\theta.$ The connection to major and minor arc strategies can be made by choosing
\begin{align}\label{majorarcapprox}
\mathcal{T}_{\Lambda'}(n)=\int_\mathfrak{M}\widehat{\Lambda'}(\alpha)e(-\alpha n)d\alpha.
\end{align}
Showing \eqref{L'fcmodel} then amounts to establishing minor arc bounds and $\Lambda'*\mathcal{T}_{\Lambda'}(n)$ can be understood by using major arc asymptotics of the associated Fourier series.

The use of Bessel's inequality is wasteful in the case of short intervals. To successfully bound $E(X,H)$, instead the following can be used
\begin{align*}
\sum_{n\in [X-H,X]}\bigl|\Lambda'*\Lambda'(n)-\Lambda'*\mathcal{T}_{\Lambda'}(n)\bigr|^2 \ll H X^{3/2} (\log X)^{3/2} \sup_\alpha \Bigl(\int_{-1/H}^{1/H}\bigl|\widehat{\Lambda'-\mathcal{T}_{\Lambda'}}(\alpha+\beta)\bigr|^2d\beta\Bigr)^{1/2}.
\end{align*}
If one now has an estimate of the form
\begin{align}\label{L'fcmodelshort}
\sup_\alpha \int_{-1/H}^{1/H}\bigl|\widehat{\Lambda'-\mathcal{T}_{\Lambda'}}(\alpha+\beta)\bigr|^2d\beta &\leq \theta X,
\end{align}
for some small enough $\theta$, one can deduce that
\begin{align*}
\Lambda'*\Lambda'(n)\approx \Lambda'*\mathcal{T}_{\Lambda'}(n)
\end{align*}
for almost all $n\in [X-H,X]$. A version of this approach can be found for example in Corollary 3.2 \cite{mrti}. We give the technical details in section 2. In this paper we get an exceptional set of size as stated in Theorem \ref{MT}, and work  with $\theta = (\log X)^{-A}$ for arbitrary large but fixed $A$.

To compare the estimate \eqref{L'fcmodelshort} to the previous condition \eqref{L'fcmodel}, it is helpful to apply Gallagher's Lemma (see Lemma \ref{GL}) to bound
\begin{align*}
\int_{-1/H}^{1/H}\bigl|\widehat{\Lambda'-\mathcal{T}_{\Lambda'}}(\alpha+\beta)\bigr|^2 d\beta \ll H^{-2} \sum_{t}\bigl| \sum_{n\in [t-H/2,t]}(\Lambda'-\mathcal{T}_{\Lambda'})(n)e(\alpha n)\bigr|^2.
\end{align*}
So one needs to replace the Fourier closeness of \eqref{L'fcmodel} by the stronger requirement that a similar condition holds on average in short intervals.

\subsection*{Minorants}
By following the strategy of the previous subsection one in fact shows an asymptotic evaluation for $\Lambda'*\Lambda'(n)$ for most $n\in [X-H,X]$. The range of $H$ that is admissible depends on our ability to show \eqref{L'fcmodelshort}. If we are only interested in lower bounds for $\Lambda'*\Lambda'(n)$, we can replace $\Lambda'$ by a suitable minorant 
\begin{align} \label{numinorant}
\Lambda'(n)\geq \nu(n)
\end{align} for which an analogue of \eqref{L'fcmodelshort} for $\nu$ and some model $\mathcal{T}_\nu$ can be proved for a smaller choice of $H$. Since $\Lambda'(n)\geq 0$, by \eqref{numinorant} and the Fourier closeness assumption \eqref{L'fcmodelshort} for $\nu$ and $\mathcal{T}_\nu$, we have for almost all $n\in [X-H,X]$ that
\begin{align*}
\Lambda'*\Lambda'(n)&\geq \Lambda'*\nu(n)\\
&\approx \Lambda'*\mathcal{T}_\nu(n).
\end{align*}
Here we use $\approx$ to denote the step in which the \emph{almost all} comes into play. This strategy does not require $\nu$ to be nonnegative and in praxis the constructed minorants take negative values. By choosing a suitable $\nu$, Jia \cite{jia} used this approach to prove a version of Theorem \ref{MT} for $H>X^{7/108+\epsilon}$. Note that $7/108\approx 0.0648$. Harman improved this result by observing that one is restricted by the need to understand
\begin{align*}
\Lambda'*\mathcal{T}_\nu(n)
\end{align*}
and that one can improve the choice of $\nu$ by replacing $\Lambda'$ by another minorant $\omega$. However, neither $\nu$ nor $\omega$ are nonnegative. Consequently one cannot bound $\Lambda'*\Lambda'(n)$ from below by $\omega*\nu(n)$. Instead, Harman uses additional majorants $\nu^+$ and $\omega^+$ and shows that one can still reach the desired lower bound by
\begin{align}\label{vsieveineq}
\Lambda'*\Lambda'(n)\geq (\nu*\omega^+ + \nu^+*\omega - \nu^+*\omega^+)(n),
\end{align}
see equation (10.1.4) \cite{harb}. This vector sieve type inequality appears in similar fashion in Brüdern and Fouvry's paper \cite{bf}. 

\subsection*{A Non-Negative Model}
The fact that the majorants appear with negative sign in \eqref{vsieveineq} means that to obtain nontrivial results the employed majorants and minorants need to be somewhat tight. The best range $H'$ for which we know that every interval $[X-H',X]$ contains almost the expected number of primes is 
\begin{align}\label{H'11/20}
H'\gg X^{11/20+\epsilon}.
\end{align}
More precisely, these intervals contain at least $0.99$ times the expected number of primes. See Theorem 10.3 \cite{harb} for the result used by Harman. Together with an upper bound of similar strength, this is numerically sufficient for the R.H.S of \eqref{vsieveineq} to give a nontrivial result, see equation (10.2.3) \cite{harb}. The condition \eqref{H'11/20} gives a lower bound for the support of $\nu$ and so appears as a factor in the admissible range for $H$. 

We remark that in their result on sums of three almost equal primes, Maynard, Matomäki, and Shao (\cite{mms}) also require numerically strong bounds. This causes the factor of $11/20$ in the exponent of their main theorem. However, the density requirement for them has a very different technical reason than in Harman's case.

In this paper we follow a different strategy that does not use \eqref{vsieveineq}. Though neither $\nu$ nor its classical major arc approximation are nonnegative, we can find a model that is.  In other words, there is a function
\begin{align}\label{posimodel}
\mathcal{T}^+_\nu(n):\mathbb{N}\to \mathbb{R}_{\geq 0}
\end{align} 
that is Fourier close to Harman's $\nu$ in the sense that 
\begin{align}\label{nuposimodel}
\sup_\alpha \int_{-1/H}^{1/H}\bigl|\widehat{\nu-\mathcal{T}_\nu^+}(\alpha+\beta)\bigr|^2d\beta \ll (\log X)^{-A} \|\nu\|_2^2.
\end{align}
So we can do the following steps. For almost all $n\in [X-H,X]$ we have
\begin{align}
\Lambda'*\Lambda'(n)\geq& \Lambda'*\nu(n) \label{strat1}\\
\approx& \Lambda'*\mathcal{T}^+_\nu(n)\label{strat2}\\
\geq& \omega*\mathcal{T}^+_\nu(n)\label{strat3}\\
\approx& \omega*\mathcal{T}_\nu(n)\label{strat4}.
\end{align}
Here \eqref{strat1} follows from $\Lambda'\geq 0$, \eqref{strat2} from the the Fourier closeness assumption \eqref{nuposimodel}, and \eqref{strat3} from $\mathcal{T}_\nu^+(n)\geq 0$, see \eqref{posimodel}. We explain \eqref{strat4} in the next subsection.  For this process to yield a non trivial result, besides standard major arc estimates, we only require $\nu$ and $\omega$ to have positive average. We can thus replace Harman's minorant by a $\omega$ for which we no longer need numerically strong results. The shortest interval length $H'$ for which we currently can prove the existence of primes in $[X-H',X]$ is
\begin{align*}
H'\gg X^{21/40},
\end{align*}
see Baker, Harman, and Pintz \cite{bhp}. Our saving over Harman's result in Theorem \ref{MT} is exactly the replacement of a factor of $11/20$ by $21/40$ in the exponent. Strictly speaking, we need more than just the existence of primes in the interval. We also require the aforementioned major arc estimates, i.e. knowledge about their distribution in arithmetic progressions to small moduli. However, the neccessary work is already done by Harman, see Theorem 10.8 \cite{harb}.

\subsection*{Choice of Model}
Eventhough the choice of $\mathcal{T}_\nu(n)$ and $\mathcal{T}_\nu^+(n)$ does not affect the result of Theorem \ref{MT}, it impacts technical aspects of the proof. It seems suitable to choose these model functions so that they are somewhat simple and the calculation of
\begin{align*}
\omega*\mathcal{T}_\nu(n)
\end{align*}
is as straight forward as possible. 

In Harman's construction the major arc main term of $\nu$ is, up to a scaling factor, exactly the same that we expect for the primes. The main term of the major arcs of order $Q$ for the weighted primes can be written as
\begin{align*}
\Lambda_Q(n):=\sum_{\substack{q\leq Q\\ a(q)^*}}\frac{\mu(q)}{\varphi(q)}e\bigl(\frac{an}{q}\bigr).
\end{align*}
In our case we have $Q=(\log X)^A$. For some $c_\nu>0$ that depends on the density of $\nu$, and for $n$ in the support of $\nu$ we set
\begin{align*}
\mathcal{T}_\nu(n)=c_\nu \Lambda_Q(n).
\end{align*}
As $\Lambda_Q$ is closely related to the major arcs, it can be used for solving additive problems in the primes without the circle method, see \cite{0hbv}. For its connection to sieves see  equation (11.31) of \cite{0ra} and equation (9.34) of \cite{cri}.

To solve $\omega*\mathcal{T}_\nu(n)$, we note that with this choice and suitably supported $\omega$ we have
\begin{align*}
\omega*\Lambda_Q(n)&=\sum_{n_1\leq n}\sum_{\substack{q\leq Q\\ a(q)^*}}\frac{\mu(q)}{\varphi(q)}e\bigl(\frac{a(n-n_1)}{q}\bigr)\omega(n_1)\\
&=\sum_{\substack{q\leq Q\\ a(q)^*}}\frac{\mu(q)}{\varphi(q)}e\bigl(\frac{an}{q} \bigr)\hat{\omega}(a/q).
\end{align*}
To calculate $\hat{\omega}(a/q)$ we can apply the aforementioned major arc information provided by Theorem 10.8 \cite{harb}. This leads to the usual singular series.

We now construct $\mathcal{T}_\nu^+(n)$. It has to be Fourier close to $c_\nu \Lambda_Q$ and so share the local distribution of primes for moduli up to $Q$. Since this local information is caused by the fact that the relevant primes have no nontrivial divisor less or equal to $Q$, the $Q$-rough numbers are an obvious choice for model. A $Q$-rough number is an integer in the support of $\rho(n,Q)$, where
\begin{align*}
\rho(n,Q)=\begin{cases}1 &\text{ if } p|n \rightarrow p> Q,\\
0 &\text{ else}
\end{cases}.
\end{align*}
Indeed, we have for $(a,q)=1$
\begin{align*}
\sum_{\substack{n\leq X \\ n\equiv a(q)}}\rho(n,Q)&=\frac{1}{\varphi(q)}\sum_{n\leq X}\rho(n,Q)\\
&\sim \frac{1}{\varphi(q)} \prod_{p\leq Q}(1-1/p) X.
\end{align*}
Together with a minor arc estimate, this was used by Cui, Li, and Xue \cite{clx} to give a lower bound to sums of sparse subsets of the primes. So we could choose $\mathcal{T}_\nu^+(n)$ as a scaled rough number indicator and generalize \cite{clx} into short intervals, as required for \eqref{nuposimodel}. 

However, we can slightly simplify the proof by using an upper bound sieve of sifting range $Q$. Since we can choose the level of distribution relatively large compared to $Q$, by a Fundamental Lemma we have similar major arc information. The necessary minor arc estimate is a simple Type I bound. For a suitable sieve $\theta_n$ we choose 
\begin{align*}
\mathcal{T}_\nu^+(n)=c_\nu \prod_{p\leq X}(1-1/p)^{-1} \theta_n
\end{align*}
and show that for $H$ as required for Theorem \ref{MT} we have
\begin{align*}
\sup_\alpha \int_{-1/H}^{1/H}\bigl|\widehat{\mathcal{T}_\nu^+-\mathcal{T}_\nu}(\alpha+\beta)\bigr|^2d\beta \ll (\log X)^{-A} \|\nu\|_2^2.
\end{align*}
In this way we get the $\approx$ in \eqref{strat4}.

\subsection*{Structure}

The structure of the paper is as follows. In the next section we formalise the above described process in the language of Fourier closeness. Afterwards, in section 3, we show how one can translate the short character sum bounds that Harman obtains for $\nu$ to relate it to $\Lambda_Q$. In section 4 we prove major arc asymptotic and minor arc bounds for a sieve in short intervals and so connect our nonnegative model to $\Lambda_Q$. Finally, in section 5 we import the required functions from Harman and prove Theorem \ref{MT}.

\section{The Circle Method with Minorants in Short Intervals}
We now prove prove the version of the circle method that we use for showing Theorem \ref{MT}. It makes precise the application of a nonnegative model as described in the introduction. The result is partially based on Corollary 3.2. \cite{mrti}, that is used by Matomäki, Radziwiłł, and Tao to obtain asymptotics for the representation of almost all integers in a short interval as the sum of two primes.

\begin{prop}[Circle Method] \label{prop}
Assume we are given parameters $X>0$, $H>0$, $0<\theta\leq 1$, and functions
\begin{align}
a, \mathcal{T}_\nu^+&:[X]\to \mathbb{R}_{\geq 0}\label{propeq-1}\\
b,\omega,\nu,\mathcal{T}_\nu&:[X]\to \mathbb{R}\nonumber
\end{align}
that fulfill the following conditions.

\begin{itemize}
\item \textsc{Minorization:} For all $n$ we have\begin{align}
\nu(n)&\leq b(n)\label{propeq1}\\
\omega(n)&\leq a(n).\label{propeq2}
\end{align}
\item \textsc{Fourier Closeness:} It holds that 
\begin{align}
\sup_\alpha \int_{-1/H}^{1/H} \bigl|\widehat{\nu-\mathcal{T}_\nu^+}(\alpha+\beta)\bigr|^2d\beta&\ll \theta \|\nu\|_2^2 \label{propeq3}\\
\sup_\alpha \int_{-1/H}^{1/H} \bigl|\widehat{\mathcal{T}_\nu-\mathcal{T}_\nu^+}(\alpha+\beta)\bigr|^2d\beta&\ll \theta \|\nu\|_2^2. \label{propeq4}
\end{align}
\end{itemize}
Then for any $\kappa >0$ and all $n\in [X-H,X]$ with at most 
\begin{align*}
O\bigl( \frac{H (\|\omega\|_2^2+\|a\|_2^2)(\|\nu\|_2^2+\|\mathcal{T}_\nu^+\|_2^2+\|\mathcal{T}_\nu\|_2^2) \sqrt{\theta}}{\kappa^2}\bigr)
\end{align*}
exceptions, we have
\begin{align*}
a*b(n)\geq \omega*\mathcal{T}_\nu(n)+O(\kappa).
\end{align*}
\end{prop}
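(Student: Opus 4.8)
The plan is to run the chain \eqref{strat1}--\eqref{strat4} of the introduction, to read off from the two ``$\approx$'' steps a second moment over $[X-H,X]$, and to finish by Chebyshev. Put $F_1:=a*(\nu-\mathcal{T}_\nu^+)$ and $F_2:=\omega*(\mathcal{T}_\nu-\mathcal{T}_\nu^+)$. Since $a\geq 0$ and $b\geq\nu$ by \eqref{propeq1}, we have $a*b(n)\geq a*\nu(n)=a*\mathcal{T}_\nu^+(n)+F_1(n)$; since $\mathcal{T}_\nu^+\geq 0$ and $a\geq\omega$ by \eqref{propeq2}, we have $a*\mathcal{T}_\nu^+(n)\geq\omega*\mathcal{T}_\nu^+(n)=\omega*\mathcal{T}_\nu(n)-F_2(n)$. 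Hence $a*b(n)\geq\omega*\mathcal{T}_\nu(n)+F_1(n)-F_2(n)$ for every $n$, so it suffices to show that $\max(|F_1(n)|,|F_2(n)|)\leq\kappa/2$ for all $n\in[X-H,X]$ outside a set of the asserted size; by Chebyshev this follows once we prove
\[
\sum_{n\in[X-H,X]}\bigl(|F_1(n)|^2+|F_2(n)|^2\bigr)\ll H(\|a\|_2^2+\|\omega\|_2^2)(\|\nu\|_2^2+\|\mathcal{T}_\nu^+\|_2^2+\|\mathcal{T}_\nu\|_2^2)\sqrt\theta .
\]

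The heart of the matter is a short-interval $L^2$ inequality in the spirit of Corollary~3.2 of \cite{mrti}: for finitely supported $f,g$,
\[
\sum_{n\in[X-H,X]}|f*g(n)|^2\ll H\|f\|_2^2\,\|g\|_2\Bigl(\sup_\alpha\int_{-1/H}^{1/H}|\widehat g(\alpha+\beta)|^2\,d\beta\Bigr)^{1/2}.
\]
To prove it (one may assume $H\geq 2$) I would take a nonnegative majorant $\Phi\colon\mathbb{R}\to\mathbb{R}_{\geq 0}$ of the indicator of $[X-H,X]$ with $\int_{\mathbb R}\Phi\ll H$ whose Fourier transform vanishes off $[-1/H,1/H]$ (a Beurling--Selberg majorant); then by Poisson summation $\Psi(\xi):=\sum_n\Phi(n)e(-\xi n)$ is, as a function on $\mathbb T$, supported on the arc $|\xi|\leq 1/H$ with $\|\Psi\|_\infty\ll H$. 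Using $\widehat{f*g}=\widehat f\,\widehat g$, expanding $|f*g(n)|^2$ by Fourier inversion and summing against $\Phi$ gives, after the substitution $\gamma'=\gamma+\beta$,
\[
\sum_{n\in[X-H,X]}|f*g(n)|^2\leq\sum_n\Phi(n)|f*g(n)|^2\ll H\int_{\mathbb T}\int_{-1/H}^{1/H}|\widehat f(\gamma)\widehat g(\gamma)|\,|\widehat f(\gamma+\beta)\widehat g(\gamma+\beta)|\,d\beta\,d\gamma .
\]
The one delicate step is to apply Cauchy--Schwarz to this integral so as to \emph{separate} the $f$-factors from the $g$-factors, bounding it by
\[
H\Bigl(\int_{\mathbb T}\int_{-1/H}^{1/H}|\widehat f(\gamma)|^2|\widehat f(\gamma+\beta)|^2\,d\beta\,d\gamma\Bigr)^{1/2}\Bigl(\int_{\mathbb T}\int_{-1/H}^{1/H}|\widehat g(\gamma)|^2|\widehat g(\gamma+\beta)|^2\,d\beta\,d\gamma\Bigr)^{1/2};
\]
carrying out the inner $\beta$-integral first and using $\int_{\mathbb T}|\widehat f|^2=\|f\|_2^2$ bounds the first factor by $\|f\|_2^4$, while the inner $\beta$-integral in the second factor is at most $\sup_\alpha\int_{-1/H}^{1/H}|\widehat g(\alpha+\beta)|^2\,d\beta$, leaving $\|g\|_2^2$ times that supremum, and taking square roots gives the claim. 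I expect this asymmetric split to be the only real obstacle: splitting the integrand symmetrically by AM--GM, as in the introduction's heuristic, would integrate the constraint $|\beta|\leq 1/H$ away and discard the entire gain, whereas pitting $f$ against $g$ is exactly what avoids losing a factor of size $\asymp X^{1/2}$.

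It remains to assemble the pieces. Applying the $L^2$ inequality with $f=a$, $g=\nu-\mathcal{T}_\nu^+$ and invoking \eqref{propeq3} to bound the supremum by $O(\theta\|\nu\|_2^2)$, and likewise with $f=\omega$, $g=\mathcal{T}_\nu-\mathcal{T}_\nu^+$ using \eqref{propeq4}, gives
\[
\sum_{n\in[X-H,X]}|F_1(n)|^2\ll H\|a\|_2^2(\|\nu\|_2+\|\mathcal{T}_\nu^+\|_2)\|\nu\|_2\sqrt\theta,\qquad \sum_{n\in[X-H,X]}|F_2(n)|^2\ll H\|\omega\|_2^2(\|\mathcal{T}_\nu\|_2+\|\mathcal{T}_\nu^+\|_2)\|\nu\|_2\sqrt\theta,
\]
and $2xy\leq x^2+y^2$ turns each right-hand side into $O\bigl(H(\|a\|_2^2+\|\omega\|_2^2)(\|\nu\|_2^2+\|\mathcal{T}_\nu^+\|_2^2+\|\mathcal{T}_\nu\|_2^2)\sqrt\theta\bigr)$. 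Summing the two and applying Chebyshev then yields the displayed second-moment bound and hence an exceptional set of size $O\bigl(H(\|\omega\|_2^2+\|a\|_2^2)(\|\nu\|_2^2+\|\mathcal{T}_\nu^+\|_2^2+\|\mathcal{T}_\nu\|_2^2)\sqrt\theta\,\kappa^{-2}\bigr)$; for every $n\in[X-H,X]$ outside it one has $|F_1(n)|,|F_2(n)|\leq\kappa/2$, so the inequality $a*b(n)\geq\omega*\mathcal{T}_\nu(n)+F_1(n)-F_2(n)$ gives $a*b(n)\geq\omega*\mathcal{T}_\nu(n)-\kappa$, which is the asserted bound $a*b(n)\geq\omega*\mathcal{T}_\nu(n)+O(\kappa)$. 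Everything beyond the Cauchy--Schwarz separation is bookkeeping with the hypotheses \eqref{propeq1}--\eqref{propeq4} and one application of Chebyshev.
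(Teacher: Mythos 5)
Your argument is correct and follows essentially the same route as the paper's proof: the same four-step chain $a*b \geq a*\nu \approx a*\mathcal{T}_\nu^+ \geq \omega*\mathcal{T}_\nu^+ \approx \omega*\mathcal{T}_\nu$, the same second-moment estimate obtained by inserting a nonnegative physical-space majorant of the interval with Fourier support in $[-1/H,1/H]$, the same asymmetric Cauchy--Schwarz separating the $a$- (resp.\ $\omega$-) factors from the $(\nu-\mathcal{T}_\nu^+)$- (resp.\ $(\mathcal{T}_\nu-\mathcal{T}_\nu^+)$-) factors, and Chebyshev to finish. The only cosmetic difference is your choice of a Beurling--Selberg majorant of the indicator of $[X-H,X]$ where the paper uses a rescaled Schwartz function $\Phi(2(n-h_0)/H)$ with $\widehat\Phi$ supported in $[-1/2,1/2]$; both play an identical role and yield $\|\Psi\|_\infty\ll H$ with $\Psi$ supported on $|\xi|\le 1/H$. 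You also correctly identify the Cauchy--Schwarz separation as the crux, which the paper passes over in a single line.
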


\begin{proof}
We start by using the minorization property of $\nu$, see \eqref{propeq1}, together with the fact that $a$ by \eqref{propeq-1} only takes nonnegative values. For all $n$ it holds that
\begin{align*}
a*b(n)\geq a*\nu(n).
\end{align*}
We now show that $a*\nu(n)$ is well approximated by $a*\mathcal{T}_\nu^+(n)$. We  write the second moment of the difference as 
\begin{align*}
\sum_{n\in [X-H,X]}\bigl|a*(\nu-\mathcal{T}_\nu^+)(n) \bigr|^2&=\sum_{n\in [X-H,X]}\Bigl|\int_0^1 \widehat{a}(\alpha)\widehat{\nu-\mathcal{T}_\nu^+}(\alpha)e(-\alpha n) d\alpha\Bigr|^2.
\end{align*}
We can now apply almost verbatim part of the proof of Proposition 3.1 \cite{mrti}. There is a nonnegative Schwartz function $\Phi:\mathbb{R}\to \mathbb{R}_{\geq 0}$ such that $\Phi(x)\geq 1$ for $x\in [-1,1]$ and such that its Fourier transform $\hat{\Phi}(\xi)$ is supported in $[-1/2,1/2]$. We get
\begin{align*}
\sum_{n\in [X-H,X]}\Bigl|\int_0^1 \hat{a}(\alpha)\widehat{\nu-\mathcal{T}_\nu^+}(\alpha)e(-\alpha n) d\alpha\Bigr|^2&\leq \sum_{n\in [X-H,X]}\Bigl|\int_0^1\hat{a}(\alpha) \widehat{\nu-\mathcal{T}_\nu^+}(\alpha)e(-\alpha n) d\alpha\Bigr|^2 \Phi\Bigl(\frac{2(n-h_0)}{H}\Bigr),
\end{align*}
where $h_0=X-H/2$. By Poisson summation
\begin{align*}
\sum_{n}e((\alpha-\beta)n)\Phi\Bigl(\frac{2 n}{H}\Bigr)=O(H)
\end{align*}
and the expression vanishes, unless $\beta\in [\alpha-1/H,\alpha+1/H]$. We get
\begin{align*}
&\sum_{n\in [X-H,X]}\Bigl|\int_0^1\hat{a}(\alpha) \widehat{\nu-\mathcal{T}_\nu^+}(\alpha)e(-\alpha n) d\alpha\Bigr|^2 \Phi\Bigl(\frac{2(n-h_0)}{H}\Bigr)\\
\leq& \int_0^1 |\hat{a}(\alpha)\|\widehat{\nu-\mathcal{T}_\nu^+}(\alpha)|\int_0^1 |\hat{a}(\beta)\|\widehat{\nu-\mathcal{T}_\nu^+}(\beta)|\Bigl|\sum_{n}e((\alpha-\beta)n)\Phi\Bigl(\frac{2 (n-h_0)}{H}\Bigr) \Bigr|d\beta d\alpha\\
\ll& H \int_0^1 |\hat{a}(\alpha)\|\widehat{\nu-\mathcal{T}_\nu^+}(\alpha)|\int_{-1/H}^{1/H} |\hat{a}(\alpha+\beta)\|\widehat{\nu-\mathcal{T}_\nu^+}(\alpha+\beta)|d\beta d\alpha.
\end{align*}
By Cauchy's inequality and condition \eqref{propeq3} 
\begin{align*}
& H\int_0^1 |\hat{a}(\alpha)\|\widehat{\nu-\mathcal{T}_\nu^+}(\alpha)|\int_{-1/H}^{1/H} |\hat{a}(\alpha+\beta)\|\widehat{\nu-\mathcal{T}_\nu^+}(\alpha+\beta)|d\beta d\alpha\\
&\leq  H \|a\|_2^2 (\|\nu\|_2+\|\mathcal{T}_\nu^+\|_2) \sup_\alpha \Bigl(\int_{-1/H}^{1/H} \bigl|\widehat{\nu-\mathcal{T}_\nu^+}(\alpha+\beta)\bigr|^2d\beta\Bigr)^{1/2}\\
&\ll H\|a\|_2^2 (\|\nu\|_2^2+\|\mathcal{T}_\nu^+\|_2^2) \sqrt{\theta}.
\end{align*}
So we have
\begin{align*}
a*\nu(n)=a*\mathcal{T}_\nu^+(n)+O(\kappa)
\end{align*}
for all $n\in [X-H,X]$ with at most
\begin{align*}
O\bigl(\frac{H\|a\|_2^2 (\|\nu\|_2^2+\|\mathcal{T}_\nu^+\|_2^2) \sqrt{\theta}}{\kappa^2}\bigr)
\end{align*}
exceptions. By nonnegativity \eqref{propeq-1} and minorization \eqref{propeq2} we have for all $n$ that
\begin{align*}
a*\mathcal{T}_\nu^+(n)\geq \omega*\mathcal{T}_\nu^+(n).
\end{align*}
By the same argument as before, now using \eqref{propeq4}, we have
\begin{align*}
\omega*\mathcal{T}_\nu^+(n)=\omega*\mathcal{T}_\nu(n)+O(\kappa)
\end{align*}
for all $n\in [X-H,X]$ with at most
\begin{align*}
O\bigl(\frac{H\|\omega\|_2^2 |(\|\nu\|_2^2+\|\mathcal{T}_\nu\|_2^2) \sqrt{\theta}}{\kappa^2}\bigr)
\end{align*}
exceptions. The proposition follows by combining the two exceptional sets.
\end{proof}
We expect to apply the proposition in such way that we have an asymptotics of the form  \begin{align}
\omega*\mathcal{T}_\nu(n)\sim \mathfrak{S}(n)Y. \label{propeq5} 
\end{align}
Here $\mathfrak{S}(n)$ may encode local information and $Y$ is some parameter depending on the choices of $\omega$ and $\nu$.

\begin{remark}
If the loss incurred by $l^2$ bounds cannot be compensated by $\theta$, one may still reach a nontrivial result. To do so, one needs to modify the approach of the proposition to make use of large value estimates, see condition (iii) of Lemma 3.2 \cite{mrtii}.
\end{remark}

\section{From Short Character Sums to $\Lambda_Q$}
Condition \eqref{propeq3} of Proposition \ref{prop} asks for Fourier closeness between $\nu$ and $\mathcal{T}_\nu^+$. However, it is technically easier to first handle the case $\nu$ and $\mathcal{T}_\nu$. We deduce the bound \eqref{propeq3} at the end of the next section.  

As mentioned in the introduction, in our application the model $\mathcal{T}_\nu(n)$ is a suitable variant of $\Lambda_Q$. Recall that
\begin{align*}
\Lambda_Q(n)=\sum_{\substack{q\leq Q\\ a(q)^*}}\frac{\mu(q)}{\varphi(q)}e\bigl(\frac{an}{q}\bigr).
\end{align*}
This can be seen as the main term of the major arcs of the weighted primes and we show in this section how one extracts it from short character sum conditions as given in equation (10.2.8) of Theorem 10.2 in \cite{harb}.

We start by considering short exponential sums related to $\Lambda_Q$. 
\begin{lemma}\label{Lqshortnew}
Let $(r,q')=1$, $H'>0$, and $t>H'$. It holds that
\begin{align*}
\sum_{n\in [t-H',t]}\Lambda_Q(n)e\bigl(\frac{r n}{q'}\bigr)=\begin{cases} \frac{\mu(q') H'}{\varphi(q')}+O(Q^3) &\text{ if } q'\leq Q \\
O(q'Q+Q^3) &\text{ if } q'>Q.
\end{cases}
\end{align*}
\end{lemma}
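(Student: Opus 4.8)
The plan is to expand $\Lambda_Q$ by definition, exchange the order of summation, and analyse the resulting complete exponential sums. Writing $S(\gamma)=\sum_{n\in[t-H',t]}e(\gamma n)$ and $N'=\#\{n\in[t-H',t]\}=H'+O(1)$, we have
\[
\sum_{n\in[t-H',t]}\Lambda_Q(n)e\Bigl(\frac{rn}{q'}\Bigr)=\sum_{q\le Q}\frac{\mu(q)}{\varphi(q)}\sum_{a(q)^*}S\Bigl(\frac aq+\frac r{q'}\Bigr).
\]
I would use the standard geometric‑sum bound $|S(\gamma)|\le\min\bigl(N',\tfrac12\lVert\gamma\rVert^{-1}\bigr)$, where $\lVert\gamma\rVert$ denotes the distance from $\gamma$ to $\mathbb Z$, together with the fact that $S(\gamma)=N'$ exactly when $\gamma\in\mathbb Z$. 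The problem then splits into locating the pairs $(q,a)$ with $a/q+r/q'\in\mathbb Z$ (which produce the main term) and bounding the remaining ``off‑diagonal'' contribution via the size of $\lVert a/q+r/q'\rVert$.

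For the integer values: if $q\mid aq'+rq$ then $q\mid aq'$, and $(a,q)=1$ forces $q\mid q'$; symmetrically $q'\mid aq'+rq$ with $(r,q')=1$ forces $q'\mid q$. Hence $a/q+r/q'\in\mathbb Z$ is possible only when $q=q'$, and then $a/q+r/q'=(a+r)/q'\in\mathbb Z$ iff $a\equiv-r\pmod{q'}$, which (as $(r,q')=1$) pins down a single reduced residue $a_0$. So a diagonal term occurs only if $q'\le Q$, in which case the pair $q=q'$, $a=a_0$ contributes $\frac{\mu(q')}{\varphi(q')}N'=\frac{\mu(q')H'}{\varphi(q')}+O(1)$, while the other $\varphi(q')-1$ residues give $\lVert(a+r)/q'\rVert\ge1/q'$ running over distinct residue classes and thus contribute $\ll\frac1{\varphi(q')}\sum_{0<|j|\le q'/2}\frac{q'}{|j|}\ll q'\log q'\ll Q\log Q$.

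For the off‑diagonal terms ($q\le Q$, $q\ne q'$): here $a/q+r/q'=m/(qq')$ with $qq'\nmid m$, so $\lVert a/q+r/q'\rVert\ge1/(qq')$; and for fixed $q$ the points $\{a/q+r/q'\bmod 1:(a,q)=1\}$ lie among the $q$ equally spaced numbers $r/q'+j/q$, hence are pairwise $1/q$‑separated and at most $2\delta q+1$ of them lie within distance $\delta$ of $0$. Ordering the distances to $0$ as $\delta_1\le\cdots\le\delta_{\varphi(q)}$ we get $\delta_1\ge1/(qq')$ and $\delta_k\ge(k-1)/(2q)$ for $k\ge2$, so $\sum_{a(q)^*}|S(a/q+r/q')|\le\tfrac12\sum_k\delta_k^{-1}\ll qq'+q\log q$. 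Summing over $q$, using the elementary identity $q/\varphi(q)=\sum_{d\mid q}\mu^2(d)/\varphi(d)$ to get $\sum_{q\le Q}q/\varphi(q)\ll Q$, the off‑diagonal total is $\ll\sum_{q\le Q}\varphi(q)^{-1}(qq'+q\log q)\ll(q'+\log Q)Q$. Collecting everything: when $q'\le Q$ the sum equals $\frac{\mu(q')H'}{\varphi(q')}+O(q'Q+Q\log Q)=\frac{\mu(q')H'}{\varphi(q')}+O(Q^2)$, which is admissible; when $q'>Q$ there is no diagonal term and we are left with $O(q'Q+Q\log Q)=O(q'Q+Q^3)$.

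The geometric‑sum estimate and the bound $\sum_{q\le Q}q/\varphi(q)\ll Q$ are routine. The only genuinely delicate point is the separation/counting step for $q\ne q'$: one must verify that the numbers $a/q+r/q'$ stay at distance $\ge1/(qq')$ from the integers and are $1/q$‑spaced, so that $\sum_{a(q)^*}\lVert a/q+r/q'\rVert^{-1}$ costs only $O(qq'+q\log q)$ rather than $O(q\cdot qq')$; it is exactly this, combined with the sharp bound for $\sum_{q\le Q}q/\varphi(q)$, that yields the clean $O(q'Q)$ error in the range $q'>Q$.
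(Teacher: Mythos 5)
Your proposal is correct and follows the same basic route as the paper: expand $\Lambda_Q$, interchange summation, and control the resulting geometric sums via separation of the Farey points from $r/q'$. The difference is in the bookkeeping. The paper uses the crude global fact that the Farey fractions of order $Q$ are $Q^{-2}$-separated, applies the geometric-sum bound $\ll Q^2$ uniformly to every non-diagonal term (singling out only the one fraction closest to $r/q'$ when $q'>Q$), and sums over $\ll Q$ weighted terms to reach $O(Q^3)$. You instead exploit the finer within-modulus structure: for fixed $q$ the shifts $r/q'+a/q$ are $1/q$-spaced and kept $\geq 1/(qq')$ from the integers when $q\neq q'$, so sorting by distance gives $\sum_{a(q)^*}\lVert r/q'+a/q\rVert^{-1}\ll qq'+q\log q$; combined with $\sum_{q\leq Q}q/\varphi(q)\ll Q$ this yields the sharper error $O(q'Q+Q\log Q)$, i.e.\ $O(Q^2)$ in the range $q'\leq Q$. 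Both arguments are sound and give admissible error terms; yours is tighter, but the paper explicitly notes that stronger $Q$-dependence is available and unnecessary for the application, which is why it settles for the quicker $Q^{-2}$-separation bound.
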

Stronger results in the $Q$ aspects are possible, but this suffices for our application.
\begin{proof}
For $q'\leq Q$ we get
\begin{align*}
\sum_{n\in [t-H',t]}\Lambda_Q(n)e\bigl(\frac{r n}{q'}\bigr)&=\frac{\mu(q')}{\varphi(q')}H'+O\Bigl(\sum_{\substack{q\leq Q, a(q)^*\\ a/q\neq r/q'} }\frac{1}{\varphi(q)}\bigl|\sum_{n\in [t-H',t]}e\bigl((r/q'-a/q)n)\bigr|\Bigr).
\end{align*}
Since the set of fractions
\begin{align*}
\{a/q : q\leq Q, (a,q)=1 \}
\end{align*}
is $Q^{-2}$ separated, we have for $a/q\neq r/q'$ the estimate
\begin{align*}
\sum_{n\in [t-H',t]}e\bigl((r/q'-a/q)n)\ll Q^2.
\end{align*}
The case $q'\leq Q$ follows.

For $q'>Q$ we can start in the same manner and absorb the contribution of all fractions $a/q$ except the one for which
\begin{align*}
\bigl|\frac{r}{q'}-\frac{a}{q} \bigr|
\end{align*}
is minimal into the estimate $O(Q^3)$. For that one fraction we use that
\begin{align*}
\bigl|\frac{r}{q'}-\frac{a}{q} \bigr|\gg \frac{1}{q'Q}.
\end{align*}
The bound
\begin{align*}
\sum_{n\in [t-H',t]}e\bigl((\frac{r}{q'}-\frac{a}{q})n\bigr)\ll q'Q
\end{align*}
completes the proof of the lemma.
\end{proof}

We also require Gallagher's Lemma.
\begin{lemma}[Gallagher]\label{GL}
Let $Z_1\leq Z_2$ and $f(n)$ be a sequence of complex numbers supported in $[Z_1,Z_2]$. Let further $2<\Delta<(Z_2-Z_1)/2$. We have
\begin{align*}
\int_{-1/\Delta}^{1/\Delta}|\hat{f}(\beta)|^2d\beta \ll \Delta^{-2}\sum_{t}\bigl|\sum_{n\in [t-\Delta/2,t]}f(n) \bigr|^2
\end{align*}
\end{lemma}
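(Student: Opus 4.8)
This is Gallagher's classical lemma, and I would prove it by the standard convolution argument. The plan is to express the inner short sum as a convolution of $f$ with the kernel $K=\mathbf 1_{[0,\Delta/2]}$, the indicator of an interval of length $\Delta/2$: regarding $f$ as the measure $\sum_n f(n)\delta_n$ on $\mathbb R$, the convolution $(f*K)(t)=\sum_{n\in[t-\Delta/2,t]}f(n)$ is exactly the quantity appearing on the right of the lemma, it is bounded and supported in $[Z_1,Z_2+\Delta/2]$, and its Fourier transform on $\mathbb R$ is $\widehat{f*K}(\beta)=\hat f(\beta)\hat K(\beta)$. The crucial input is a pointwise lower bound for $\hat K$ on the short interval: from $\hat K(\beta)=\int_0^{\Delta/2}e(\beta u)\,du$ one gets $|\hat K(\beta)|=|\sin(\pi\beta\Delta/2)|/(\pi|\beta|)$, and since $|\sin x|\ge(2/\pi)|x|$ for $|x|\le\pi/2$ this gives $|\hat K(\beta)|\ge\Delta/\pi$ for all $|\beta|\le1/\Delta$.

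Granting this, the argument runs as follows. On $[-1/\Delta,1/\Delta]$ we have $|\hat f(\beta)|^2\le(\pi/\Delta)^2|\hat f(\beta)\hat K(\beta)|^2$, so
\begin{align*}
\int_{-1/\Delta}^{1/\Delta}|\hat f(\beta)|^2\,d\beta\le\frac{\pi^2}{\Delta^2}\int_{\mathbb R}|\hat f(\beta)\hat K(\beta)|^2\,d\beta=\frac{\pi^2}{\Delta^2}\int_{\mathbb R}|(f*K)(t)|^2\,dt,
\end{align*}
the last equality being Plancherel on $\mathbb R$ (valid since $f*K\in L^1\cap L^2(\mathbb R)$). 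It then remains to replace the integral by the sum. The function $t\mapsto(f*K)(t)$ is piecewise constant, changing value only at the points of $\mathbb Z\cup(\Delta/2+\mathbb Z)$; the hypotheses $2<\Delta<(Z_2-Z_1)/2$ ensure this picture is non-degenerate, and in particular consecutive breakpoints lie at distance at most $1$. On each constancy interval the common value equals $\sum_{n\in[t-\Delta/2,t]}f(n)$ for $t$ an endpoint, so $\int_{\mathbb R}|(f*K)(t)|^2\,dt\le\sum_t\bigl|\sum_{n\in[t-\Delta/2,t]}f(n)\bigr|^2$, the outer sum running over these breakpoints; combining with the previous display yields the lemma.

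The one genuinely load-bearing step is the lower bound $|\hat K(\beta)|\gg\Delta$ on $[-1/\Delta,1/\Delta]$: this is what matches the kernel length $\Delta/2$ to the interval radius $1/\Delta$ and produces the decisive factor $\Delta^{-2}$. The remaining ingredients — Plancherel and the passage from the integral to the discrete sum, including the bookkeeping with open/closed endpoint conventions needed to realise each constancy value of $f*K$ as one of the short sums $\sum_{n\in[t-\Delta/2,t]}f(n)$ — are routine.
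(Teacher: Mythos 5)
Your proof is correct. Note however that the paper does not actually prove this lemma: the entire body of its proof is the single sentence ``This is Lemma 1 in [Gal70]'', a bare citation to Gallagher's 1970 paper. Your convolution argument --- writing the short sum as $(f*K)(t)$ with $K=\mathbf{1}_{[0,\Delta/2]}$, using the lower bound $|\hat K(\beta)|\ge\Delta/\pi$ on $[-1/\Delta,1/\Delta]$, and applying Plancherel to $f*K$ --- is precisely the proof Gallagher gives, so a reader following the citation would find essentially what you wrote; in that sense your argument is the ``same'' as the paper's, merely made explicit rather than delegated to the reference. One small point worth flagging: Gallagher's Lemma 1 (and the natural output of your Plancherel step) has the integral $\int_{\mathbb R}|(f*K)(t)|^2\,dt$ on the right-hand side, and the paper's $\sum_t$ is loose notation for this --- indeed elsewhere in the paper a stray $dy$ is left dangling after such a $\sum_t$, confirming the two are used interchangeably. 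Your final discretisation paragraph is therefore a reasonable reading of the notation, though you should be aware that the breakpoint set of $f*K$ is $\mathbb Z\cup(\Delta/2+\mathbb Z)$ rather than $\mathbb Z$, so the discrete sum you produce is over that two-points-per-unit-interval set; this is harmless given the $\ll$ and the way the lemma is subsequently used, but the constancy value on an interval of the form $(k+\Delta/2-\lfloor\Delta/2\rfloor,\,k+1)$ is realised as $\sum_{n\in[t-\Delta/2,t]}f(n)$ at a non-integer $t$, not at $t=k$ or $t=k+1$.
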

\begin{proof}
This is Lemma 1 in \cite{gal}.
\end{proof}

We now define $\mathcal{T}_\nu$. For parameters $Y$, $c_\nu$, a large $A>0$, we choose $Q=(\log X)^A$ and set for $n\in (Y,2Y]$ 
\begin{align}
\mathcal{T}_\nu(n)&=c_\nu \Lambda_Q(n) \label{Tnudef}\\
&=c_\nu \sum_{\substack{q\leq Q\\ a(q)^*}}\frac{\mu(q)}{\varphi(q)}e\bigl(\frac{a n}{q} \bigr)\nonumber.
\end{align}
Further $\mathcal{T}_\nu(n)=0$ for any other $n$. With this we can state the main result of this section. 

\begin{lemma}[From short character sums to to $\Lambda_Q$]\label{shorttoLq}
Let $A>0$, $\delta>0$, $c_\nu>0$, $Y>0$ and $Y^{\delta}<H<Y^{1-\delta}$ be parameters. Let further $\nu(n):(Y,2Y]\cap \mathbb{Z}\to \mathbb{R}$ be a function that is supported on $H^{1/2}$-rough numbers only and fulfills for some $B\leq A-1$ the bound
\begin{align}\label{nuest}
\sum_n |\nu(n)|^2 \ll Y (\log Y)^B.
\end{align}
Assume that for any $q\leq H^{1/2}$ it holds that
\begin{align}\label{nushort1}
\sum_{\substack{\chi(q)\\ \chi\not \in E_q}}\sum_t \Bigl|\sum_{n\in [t-qH^{1/2}/3,t]}(\nu(n)\chi(n)-\delta_\chi c_\nu)\Bigr|^2 dy\ll q^2 H Y (\log Y)^{-A-1}.
\end{align}
Here $\delta_\chi=1$ or $\delta_\chi=0$ depending on whether $\chi$ is principal or not and $E_q$ is a set of $O(q^{1/2}(\log Y)^{-2A})$ characters.

Then, for $\mathcal{T}_\nu$ as defined in \eqref{Tnudef}, we have
\begin{align*}
\sup_\alpha \int_{-1/H}^{1/H}\bigl|\widehat{\nu-\mathcal{T}_\nu} (\alpha+\beta)\bigr|^2d\beta \ll  Y(\log Y)^{-A}.
\end{align*}
\end{lemma}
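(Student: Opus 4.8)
The plan is to pass from the Fourier-integral quantity to short exponential sums via Gallagher's Lemma (Lemma \ref{GL}), then expand $\nu$ and $\mathcal{T}_\nu$ by reducing the exponential $e(\alpha n)$ to a nearby rational $a/q$ with $q \leq H^{1/2}$, and finally match the resulting short character sums of $\nu$ against the exponential sums of $\Lambda_Q$ computed in Lemma \ref{Lqshortnew}. First I would apply Gallagher with $\Delta = H$ (note the support is an interval of length $Y \gg H$, so the hypotheses of Lemma \ref{GL} are met) to get, uniformly in $\alpha$,
\begin{align*}
\int_{-1/H}^{1/H}\bigl|\widehat{(\nu-\mathcal{T}_\nu)E_\alpha}(\beta)\bigr|^2 d\beta \ll H^{-2}\sum_t \Bigl|\sum_{n\in[t-H/2,t]}(\nu(n)-\mathcal{T}_\nu(n))e(\alpha n)\Bigr|^2,
\end{align*}
where $E_\alpha(n) = e(\alpha n)$; the point is that $\widehat{\nu-\mathcal{T}_\nu}(\alpha+\beta)$ is exactly the Fourier transform of $(\nu-\mathcal{T}_\nu)E_\alpha$ evaluated at $\beta$. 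So it suffices to bound the right-hand side by $Y(\log Y)^{-A}$ uniformly in $\alpha$, which via $Q = (\log Y)^A$ is $\ll Y H^2 Q^{-1}$-type savings that we must extract.

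Next I would handle the Diophantine approximation. By Dirichlet, write $\alpha = a/q + \eta$ with $q \leq H^{1/2}$ and $|\eta| \leq 1/(qH^{1/2})$. There are two regimes. If $q$ is large, say $q > (\log Y)^{C}$ for suitable $C$, or more precisely if the rational $a/q$ is "not major", then $\mathcal{T}_\nu = c_\nu\Lambda_Q$ contributes negligibly by Lemma \ref{Lqshortnew} (the $O(q Q + Q^3)$ branch, summed over the $\ll Y/H$ relevant $t$), and the $\nu$ part is controlled by the minor-arc-type input: here I would use that $\nu$ is supported on $H^{1/2}$-rough numbers together with hypothesis \eqref{nushort1}, which after opening $\chi$ via orthogonality on the progression mod $q$ gives precisely the cancellation needed. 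If $q$ is small (major-arc regime), then $e(\alpha n) = e(an/q)e(\eta n)$ and, since $|\eta| \le 1/(qH^{1/2})$ while the inner sum has length $\le H/2 \le qH^{1/2}/3 \cdot (\text{const})$ after adjusting — here one should arrange the $qH^{1/2}/3$ in \eqref{nushort1} to dominate $H/2$, which holds since $q \geq 1$ and we may shrink — the factor $e(\eta n)$ is essentially constant over a shorter subinterval, or handled by partial summation / splitting $[t-H/2,t]$ into $\ll 1$ pieces of length $\le qH^{1/2}/3$. On each such piece, expanding $e(an/q) = \sum_{\chi(q)} (\text{Gauss-sum factor}) \chi(n)$ via the standard character decomposition of additive characters mod $q$ converts $\sum_n \nu(n)e(an/q)$ into $\sum_\chi c(\chi)\sum_n \nu(n)\chi(n)$, and the main term $\delta_\chi c_\nu$ from \eqref{nushort1} reassembles — by the same Gauss-sum identity run backwards — into exactly $c_\nu\sum_{n} \Lambda_Q$-type main term, i.e.\ the short sum of $\mathcal{T}_\nu$ computed in Lemma \ref{Lqshortnew} (the $\mu(q')H'/\varphi(q') + O(Q^3)$ branch). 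The exceptional characters $E_q$ contribute $O(q^{1/2}(\log Y)^{-2A})$ many terms, each bounded trivially using \eqref{nuest} and Cauchy–Schwarz, which is absorbed.

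Assembling: in the major regime the difference $\sum_n(\nu-\mathcal{T}_\nu)(n)e(\alpha n)$ over $[t-H/2,t]$ is, up to the contribution of bad characters and the $O(Q^3)$ and $O(q^{1/2}(\log Y)^{-2A}\cdot\|\nu\|)$ errors, bounded by the character-sum discrepancy, so squaring and summing over $t$ (there are $\ll Y/(qH^{1/2})$ values of $t$ once we also sum over the $\ll q H^{1/2}/H$ subpieces — bookkeeping to be done carefully) and invoking \eqref{nushort1} gives $\ll q^2 H Y(\log Y)^{-A-1}$ divided by appropriate factors, which after the $H^{-2}$ from Gallagher and summing the resulting geometric-type series in $q \leq H^{1/2}$ (costing at most a $\log Y \ll (\log Y)^{1}$, hence the $-A-1$ versus the target $-A$) yields $Y(\log Y)^{-A}$. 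The error terms $O(Q^3) = O((\log Y)^{3A})$ look alarming but are squared and summed over $\ll Y/H \ll Y^{1-\delta}$ values of $t$ and over $q\le H^{1/2}$, giving $\ll H^{-2}\cdot Y^{1-\delta}\cdot H^{1/2}\cdot Q^6 = o(Y)$ since $\delta$ is fixed and $Q$ is only a power of $\log Y$; likewise the bad-character term is polynomial-in-log times $Y(\log Y)^{-4A}\ll Y(\log Y)^{-A}$.

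The main obstacle I anticipate is the bookkeeping at the interface between Gallagher's $H/2$-length windows and the $qH^{1/2}/3$-length windows appearing in hypothesis \eqref{nushort1}: one must split the Gallagher windows into subintervals on which $e(\eta n)$ is controllable and on which the hypothesis applies, and then correctly account for the number of $(t, \text{subinterval}, a, q)$ tuples so that the factors of $q$, $H^{1/2}$, and $Q$ cancel to leave exactly $(\log Y)^{-A}$ rather than, say, $(\log Y)^{-A+2}$. A secondary subtlety is ensuring the $H^{1/2}$-roughness of $\nu$ is genuinely used to discard the intermediate moduli $Q < q \le H^{1/2}$ in the sense that $\Lambda_Q$ has no corresponding main term there, so that in that range we only need the (unconditional, from \eqref{nuest} and the roughness) bound on $\nu$'s character sums rather than a matching asymptotic — but this is exactly what Lemma \ref{Lqshortnew}'s second branch provides for the $\mathcal{T}_\nu$ side, so the two sides are compatible. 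Everything else is routine Cauchy–Schwarz and geometric summation.
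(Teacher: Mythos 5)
Your overall plan---Gallagher to pass to short exponential sums, Diophantine localization to a rational $a/q$ with $q\leq H^{1/2}$, expansion of $e(an/q)$ into Dirichlet characters via Gauss sums (valid because $\nu$ is $H^{1/2}$-rough), matching the $\mathcal{T}_\nu$ side against Lemma \ref{Lqshortnew} and the $\nu$ side against hypothesis \eqref{nushort1}, and a separate Cauchy--Schwarz treatment of the exceptional set $E_q$---is the same family of ideas as the paper's proof. However, you invert the first two steps, and this creates a genuine gap that you identify but do not correctly resolve.

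The paper performs a Farey dissection of order $\lfloor H^{1/2}\rfloor$ \emph{first}: since each arc $I_{q,r}$ has width $\asymp 1/(qH^{1/2})\geq 2/H$, the interval $[\alpha-1/H,\alpha+1/H]$ meets at most two arcs, so $\sup_\alpha\int_{-1/H}^{1/H}$ is bounded by $\max_{q,r}\int_{I_{q,r}}$. After recentering at $r/q$, Gallagher is applied with $\Delta\asymp qH^{1/2}$, which produces windows of length $qH^{1/2}/3$ \emph{exactly} matching the windows in \eqref{nushort1}, with the correct normalization $1/(q^2H)$ and with the exponential already frozen at $e(rn/q)$ (no $\eta$ to dispose of). You instead apply Gallagher with $\Delta=H$, giving windows of length $H/2$, and then approximate $\alpha\approx a/q+\eta$. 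This creates a mismatch, and your proposed reconciliation is arithmetically wrong in two places: you claim $qH^{1/2}/3$ dominates $H/2$ for $q\geq 1$ (false: it requires $q\geq 3H^{1/2}/2$, which excludes the entire range $q\leq H^{1/2}$), and you claim the number of subpieces of length $qH^{1/2}/3$ inside $[t-H/2,t]$ is $\ll qH^{1/2}/H\leq 1$, when in fact it is $\asymp H^{1/2}/q\geq 1$, potentially as large as $H^{1/2}$. The splitting-plus-Cauchy--Schwarz route is in fact salvageable---the factor $(H^{1/2}/q)$ from Cauchy--Schwarz and the reindexing factor $(H^{1/2}/q)$ from enumerating sub-window endpoints combine to turn your $H^{-2}$ into $\asymp 1/(q^2H)$, which matches the paper's normalization with no loss, and the residual $e(\eta n)$ varies by $O(1)$ over each sub-window so it can be removed by partial summation---but as written your count points in the wrong direction and would lead you to discard the splitting as unnecessary, which it is not. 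The paper's ordering (Farey, then Gallagher with arc-dependent $\Delta$) avoids this bookkeeping entirely, and I would recommend it: it makes the hypothesis window length $qH^{1/2}/3$ appear naturally as the Gallagher window, rather than something to be manufactured after the fact.

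Two smaller remarks. For $Q<q\leq H^{1/2}$ you describe the $\nu$ contribution as ``controlled by the minor-arc-type input,'' but the relevant point is more specific: the character decomposition still produces a principal-character main term $\frac{\mu(q)}{\varphi(q)}c_\nu\cdot(\text{window length})$ for $\nu$, and one must note that $\varphi(q)\gg q/\log\log q> Q/\log\log Q$ makes this main term already small enough for $q>Q$, rather than relying on \eqref{nushort1} alone. And your claim that the summation in $q$ ``costs at most a $\log Y$'' is not how the $(\log Y)^{-A-1}$-versus-$(\log Y)^{-A}$ gap is spent in the paper: after the Farey dissection there is a single $\max$ over $(q,r)$, not a sum over $q$, and the extra power of $\log$ in \eqref{nushort1} is used to absorb $\log Y$ losses from $|\tau(\chi)|\leq\sqrt q$, the $\|\nu\|_2^2\ll Y(\log Y)^B$ input, and the $E_q$ count, not from a $q$-summation.
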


\begin{proof}
The proof is based on section 10.3 of \cite{harb}. Dissect $[0,1]$ into Farey arcs of order $\lfloor H^{1/2} \rfloor$ and write $I_{q,r}$ for the arc centered at $r/q$. Then
\begin{align}
I_{q,r}\subset \Bigl[\frac{r}{q}-\frac{1}{q\lfloor H^{1/2}\rfloor},\frac{r}{q}+\frac{1}{q \lfloor H^{1/2}\rfloor } \Bigr] \label{Iqrrange}
\end{align}
and for any fixed $\alpha$ at most $2$ intervals $I_{q,r}$ overlap $[\alpha-1/H,\alpha+1/H]$. We get
\begin{align*}
\sup_\alpha \int_{-1/H}^{1/H}\bigl|\widehat{\nu-\mathcal{T}_\nu} (\alpha+\beta)\bigr|^2\ll \max_{\substack{q\leq H^{1/2}\\r(q)^*}} \int_{I_{q,r}} \bigl|\widehat{\nu-\mathcal{T}_\nu}(\beta)\bigr|^2d\beta.
\end{align*}
We use \eqref{Iqrrange} and apply Lemma \ref{GL} to get
\begin{align}\label{proofeq1}
\int_{I_{q,r}} \bigl|\widehat{\nu-\mathcal{T}_\nu}(\beta)\bigr|^2d\beta\ll \frac{1}{q^2 H}\sum_t \bigl|\sum_{n\in [t-qH^{1/2}/3,t]}(\nu(n)-\mathcal{T}_\nu(n))e\bigl(\frac{rn}{q}\bigr) \bigr|^2.
\end{align}
We recall the choice of $\mathcal{T}_\nu$ given in \eqref{Tnudef}. By Lemma \ref{Lqshortnew} we get 
\begin{align*}
\sum_{n\in [t-qH^{1/2}/3,t]}\mathcal{T}_\nu(n)e\bigl(\frac{rn}{q}\bigr)=\begin{cases}
\frac{\mu(q)}{\varphi(q)}c_\nu\sum_{n\in [t-qH^{1/2}/3,t]\cap (Y,2Y]}1+O(Q^3) &\text{ if } q\leq Q \\
O(qQ+Q^3) &\text{ if } Q<q\leq H^{1/2}.
\end{cases}
\end{align*}

To consider $\nu$, we use the following Ramanujan sum and some of its basic properties. For a Dirichlet character $\chi$ to the modulus $q$ we write
\begin{align*}
\tau(\chi)=\sum_{a(q)^*}\chi(r)e\bigl(\frac{r}{q}\bigr).
\end{align*}
It holds that
\begin{align}
|\tau(\chi)|&\leq \sqrt{q} \label{rama1}\\
\tau(\chi_0)&=\mu(q), \label{rama2}
\end{align}
where $\chi_0$ is the principal character. Ramanujan sums enter our proof by observing that for $(rn,q)=1$ we have
\begin{align}
e\bigl(\frac{r n}{q})=\frac{1}{\varphi(q)}\sum_{\chi(q)}\tau(\overline{\chi})\chi(r n) \label{rama3}.
\end{align}
Using the fact that $\nu$ is supported on $H^{1/2}$-rough numbers only and $q\leq H^{1/2}$, we apply \eqref{rama3} and \eqref{rama2} to obtain
\begin{align*}
\sum_{n\in [t-qH^{1/2}/3,t]} \nu(n)e\bigl(\frac{rn}{q}\bigl)&=\sum_{n\in [t-qH^{1/2}/3,t]}\nu(n)\frac{1}{\varphi(q)}\sum_{\chi(q)}\tau(\overline{\chi})\chi(n r)\\
&=\frac{1}{\varphi(q)}\sum_\chi\tau(\overline{\chi})\chi(r)\sum_{n\in [t-qH^{1/2}/3,t]}\nu(n)\chi(n)\\
&=\frac{\mu(q)}{\varphi(q)}c_\nu\sum_{n\in [t-qH^{1/2}/3,t]}1+E(t,q,r),
\end{align*}
where
\begin{align*}
E(t,q,r)=\sum_{\chi(q)} \frac{\tau(\overline{\chi}(q))\chi(r)}{\varphi(q)}\sum_{n\in [t-qH^{1/2}/3,t]}\bigl(\nu(n)\chi(n)-\delta_\chi c_\nu\bigr).
\end{align*}

We now return to \eqref{proofeq1}. Let us first consider the case $q\leq Q$. By above considerations, if $t\in (Y+qH^{1/2}/3,Y]$ then the main terms in
\begin{align*}
\sum_{n\in [t-qH^{1/2}/3,t]}(\nu(n)-\mathcal{T}_\nu(n))e\bigl(\frac{rn}{q}\bigr) 
\end{align*}
cancel exactly. For other $t$ we can bound the main term contribution of $\nu$ and $\mathcal{T}_\nu$ to \eqref{proofeq1} by $H \log H$. Similarly, if $q>Q$ we can bound the main term of $\nu$ trivially by using the estimate $\phi(q)\gg q/\log \log q$. By including all error terms, we obtain for any $q\leq H^{1/2}$ the evaluation
\begin{align*}
\frac{1}{q^2 H}\sum_t \bigl|\sum_{n\in [t-qH^{1/2}/3,t]}(\nu(n)-\mathcal{T}_\nu(n))e\bigl(\frac{rn}{q}\bigr) \bigr|^2&= \frac{1}{q^2 H}\sum_t \bigl|E(t,q,r) \bigr|^2\\&+O\Bigl(Y\bigl(\frac{ (\log \log H)^2}{Q^2}+\frac{Q^2}{H}+\frac{Q^3}{H}\bigr)+H\log H\Bigr).
\end{align*}
Since $Y^\delta<H<Y^{1-\delta}$ and $Q=(\log X)^A$, the $O$ term is sufficiently small and we are left with bounding the contribution of $E$.

We split up the sum over $\chi$ in the definition of $E(t,q,r)$ into whether $\chi \in E_q$ or not
\begin{align*}
E(t,q,r)&=\sum_{\chi\in E_q}+\sum_{\chi \not \in E_q}\\
&=E_1(t,q,r)+E_2(t,q,r),
\end{align*}
say. The Lemma now follows, if for $i \in \{1,2\}$ we can show
\begin{align*}
\frac{1}{q^2 H}\sum_{t}\bigl|E_i(t,q,r)|^2 \ll  Y (\log Y)^{-A}.
\end{align*}
By Cauchy's inequality, condition \eqref{nuest}, the estimate for $|\tau(\chi)|$ given by \eqref{rama1}, and the bound for $E_q$ we have
\begin{align*}
\frac{1}{q^2 H}\sum_{t}\bigl|E_1(t,q,r)|^2&\leq \frac{q |E_q|}{\varphi(q)^2}(Y+\sum_{n}|\nu(n)|^2) \\
&\ll Y (\log Y)^{B-2A+1}\\
&\ll Y (\log Y)^{-A}.
\end{align*}
Similarly, for $i=2$ we again use Cauchy's inequality to get
\begin{align*}
\frac{1}{q^2 H}\sum_{t}\bigl|E_2(t,q,r)|^2&\leq \frac{1}{\varphi(q)qH} \sum_{\substack{\chi(q)\\ \chi \not \in E_q}}\sum_{t} \bigl|\sum_{n\in [t-qH^{1/2}/3,t]}\bigl(\nu(n)\chi(n)-\delta_\chi c_\nu\bigr)\bigr|^2.
\end{align*}
By the assumed bound \eqref{nushort1}, this completes the proof.
\end{proof}

\section{From a Sieve to $\Lambda_Q$}
We now show that a suitably scaled upper bound sieve is Fourier close to $\Lambda_Q$. Since we intend to work with a Fundamental Lemma, the precise choice of the sieve is not important. We work with the $\beta$-sieve as described in chapter 6.4 of Opera de Cribro \cite{cri}. Let 
\begin{align*}
\theta_n=\theta_n(D,z)
\end{align*}
be the upper bound $\beta$-sieve for $\beta=10$ with sifting range $z$ and level $D$. Note that this means in particular $\theta_n\geq 0$. We write
\begin{align}\label{sievconv}
\theta_n=\sum_{d|n}\lambda_d,
\end{align} 
where $\lambda_d$ are the associated sieve weights that fulfill $|\lambda_d|\leq 1$ and are supported on $d\leq D$ only. 

We now show an analogue result of Lemma \ref{Lqshortnew} for these sieve weights. 

\begin{lemma}\label{Sieveshortnew}
Let $(r,q)=1$, $H'>0$, $t>H'$, and $\theta_n=\theta_n(D,z)$ as given above. We have
\begin{align*}
\prod_{p\leq z}\bigl(1-p^{-1})^{-1}\sum_{n\in [t-H',t]}\theta_ne\bigl(\frac{rn}{q}\bigr)=\begin{cases} \frac{\mu(q) H'}{\varphi(q)}+O(H'e^{-\frac{\log D}{\log z}}+qD) &\text{ if } q\leq z \\
O\bigl((H'/q+D+q)\log qH'\bigr) &\text{ if } q>z.
\end{cases}
\end{align*}
\end{lemma}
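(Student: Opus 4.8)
The plan is to establish this analogue of Lemma~\ref{Lqshortnew} as an elementary Type~I estimate. Writing $\theta_n=\sum_{d\mid n}\lambda_d$ and interchanging the order of summation gives
\begin{align*}
\sum_{n\in[t-H',t]}\theta_n e\Bigl(\frac{rn}{q}\Bigr)=\sum_{d\le D}\lambda_d\sum_{m:\ dm\in[t-H',t]}e\Bigl(\frac{rdm}{q}\Bigr),
\end{align*}
where for each $d\le D$ the variable $m$ ranges over an interval of length $H'/d$. I would split the $d$-sum into the parts $q\mid d$ and $q\nmid d$; the first produces the main term and the second is an error.

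When $q\mid d$ the inner exponential is $\equiv 1$, so the inner sum equals $H'/d+O(1)$, and collecting the $O(1)$'s costs only $O(\#\{d\le D:q\mid d\})=O(D/q)$. The main part is $H'\sum_{d\le D,\,q\mid d}\lambda_d/d$. If $q$ is not squarefree, or is divisible by a prime $\ge z$, there is no admissible $d$ and this vanishes, in accordance with $\mu(q)=0$. Otherwise $q\mid P(z)$ and, writing $d=qe$, the sum equals $\frac{H'}{q}\sum_{e\le D/q}\lambda_{qe}/e$, which is the sieve sum of the $\beta$-sieve in the progression $q\mid n$ — effectively an upper bound $\beta$-sieve of level $D/q$ for the primes $p<z$ coprime to $q$. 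By the Fundamental Lemma (\cite{cri}, Ch.~6, applicable because $\beta=10$ and $\log(D/q)/\log z$ is large in our application) this is $\mu(q)\prod_{p<z,\,p\nmid q}(1-1/p)\bigl(1+O(e^{-\log D/\log z})\bigr)$, and after multiplying through by $\prod_{p\le z}(1-1/p)^{-1}$ and using $\prod_{p\mid q}(1-1/p)=\varphi(q)/q$ this yields exactly $\mu(q)H'/\varphi(q)+O(H'e^{-\log D/\log z})$ when $q\le z$, and is $O(H'/q)$ in absolute value when $q>z$.

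When $q\nmid d$ the inner sum is a geometric progression with ratio $e(rd/q)$, whose reduced denominator is $q/(q,d)>1$; hence it is $\ll\min\bigl(H'/d+1,\ \|rd/q\|^{-1}\bigr)$ with $\|rd/q\|\ge (q,d)/q$. Since $|\lambda_d|\le1$, I would sum over $d\le D$ by sorting according to $rd\bmod q$: for $q\nmid d$ one has $rd\not\equiv0\pmod q$, each of the $q-1$ nonzero residues $a$ occurs for $O(D/q+1)$ values of $d$, and $\sum_{1\le a<q}\|a/q\|^{-1}\ll q\log q$, while for the small $d$ the bound $H'/d$ is used instead. This gives $O\bigl((H'/q+D+q)\log(qH')\bigr)$ (and the cruder $O(qD)$ suffices in the range $q\le z$), which is absorbed by the stated error after multiplication by $\prod_{p\le z}(1-1/p)^{-1}$. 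Combining the two parts completes the proof.

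I expect the one genuinely substantive step to be the evaluation of the $q\mid d$ contribution: identifying it as a bona fide sieve sum of shifted level so that the Fundamental Lemma delivers the constant $\mu(q)\prod_{p\le z}(1-1/p)$, and checking that the degenerate cases ($q$ non-squarefree, or $q$ with a prime factor $\ge z$) line up with $\mu(q)=0$. The remaining work is routine geometric-sum and divisor bookkeeping.
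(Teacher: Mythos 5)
Your proposal takes a genuinely different route from the paper's. The paper first sorts the $n$-sum into residue classes mod $q$, treats each congruence class $n\equiv a\,(q)$ as a sieve problem with the modified sifting density $g(d,q,a)$ (which for $(a,q)=1$ is $d^{-1}\mathbf{1}[(d,q)=1]$), applies the Fundamental Lemma (Lemma 6.8 of \cite{cri}) to each class, and then sums the Ramanujan sum over $a$; the classes with $(a,q)\neq 1$ are bounded by comparison with the $q=1$ case. You instead open $\theta_n=\sum_{d\mid n}\lambda_d$ and swap the $d$- and $n$-sums (a Type~I opening), isolating the main term from the terms with $q\mid d$. The paper's route has the advantage that Lemma 6.8 applies verbatim, since that lemma is stated for an arbitrary admissible density $g$; there is no need to decompose the weight $\lambda_d$ itself.

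The substantive step you flag — evaluating $H'\sum_{d\le D,\,q\mid d}\lambda_d/d$ — is asserted rather than proved, and as stated it is not quite right. You claim that after writing $d=qe$ the fiber $\{e:qe\in\mathcal D^+,\ (e,q)=1\}$ of the $\beta$-sieve support $\mathcal D^+$ is ``effectively an upper bound $\beta$-sieve of level $D/q$ for the primes $p<z$ coprime to $q$''. That is not literally true: the combinatorial conditions defining $\mathcal D^+$ are of the form $p_1\cdots p_m p_m^\beta<D$ for prime chains $p_1>p_2>\cdots$, and fixing a divisor $q$ inserts the primes of $q$ into these chains at positions depending on how they interleave with the primes of $e$; the induced constraints on $e$ are not the $\beta$-sieve conditions of level $D/q$. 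Consequently the Fundamental Lemma does not apply directly to your fibered sum, and as written the argument has a gap. The gap is repairable without changing your overall strategy: write $\mathbf{1}[q\mid d]=\sum_{m\mid q}\mu(m)\mathbf{1}[(d,m)=1]$ for squarefree $q$ (for non-squarefree $q$ every term with $q\mid d$ vanishes because $\lambda_d$ is supported on squarefree $d$, matching $\mu(q)=0$); each resulting $\sum_{(d,m)=1}\lambda_d/d$ is a Fundamental Lemma sum with density $g_m(p)=p^{-1}\mathbf{1}[p\nmid m]$, so Lemma 6.8 gives $\prod_{p<z,\,p\nmid m}(1-1/p)\bigl(1+O(e^{-\log D/\log z})\bigr)$, and recombining yields the factor $\mu(q)/\varphi(q)$ in the main term. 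This costs an extra harmless factor $\prod_{p\mid q}\bigl(1+(1-1/p)^{-1}\bigr)\le 3^{\omega(q)}=q^{o(1)}$ in the $e^{-\log D/\log z}$ error and a $\log z$ from $V(z)^{-1}$ in the $O(qD)$ term; both are absorbed in the application since $q\le z=(\log Y)^A$. The geometric-sum bookkeeping for $q\nmid d$, and the $q>z$ Type~I bound in your proposal, coincide with what the paper does.
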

\begin{proof}
Let first $q\leq z$. We sort into residues classes mod $q$ to get
\begin{align*}
\sum_{n\in [t-H',t]}\theta_ne\bigl(\frac{rn}{q}\bigr)&=\sum_{a(q)}e\bigl(\frac{r a}{q}\bigr)\sum_{\substack{n\in [t-H',t]\\ n\equiv a(q)}}\theta_n.
\end{align*}
Now
\begin{align*}
\sum_{\substack{n\in [t-H',t]\\ d|n \text{ and }n\equiv a(q)}}1=\frac{g(d,q,a)H'}{q}+O(1),
\end{align*}
where $g(d,q,a)$ is multiplicative in $d$ and for primes given by
\begin{align*}
g(p,q,a)=\begin{cases}\frac{1}{p} &\text{ if } p\nmid q \\
1 &\text{ if } p|q \text{ and } p|  a \\
0 &\text{ else.}
\end{cases}
\end{align*}
By Lemma 6.8 \cite{cri}, the trivial bound $V^+(D,z)\geq V(z)$, and a short calculation involving the sifting density function $g(d,q,a)$, we have for $(a,q)=1$
\begin{align*}
\prod_{p\leq z}\bigl(1-p^{-1})^{-1}\sum_{\substack{n\in [t-H',t]\\ n\equiv a(q)}}\theta_n=\frac{H'}{\varphi(q)}\bigl(1+O(e^{-\frac{\log D}{\log z}})\bigr)+O(D).
\end{align*}
We cannot apply Lemma 6.8 \cite{cri} directly on the case $(a,q)\neq 1$. However, by using the case $q=1$ and subtracting the contribution of coprime values, we can bound
\begin{align*}
\prod_{p\leq z}\bigl(1-p^{-1})^{-1}\sum_{\substack{a(q)\\ (a,q)\neq 1}}\sum_{\substack{n\in [t-H',t]\\ n\equiv a(q)}}\theta_n=O(H'e^{-\frac{\log D}{\log z}}+qD).
\end{align*}
Thus we get 
\begin{align*}
\prod_{p\leq z}\bigl(1-p^{-1})^{-1}\sum_{n\in [t-H',t]}\theta_ne\bigl(\frac{rn}{q}\bigr)=\frac{\mu(q)H'}{\varphi(q)}+O(H'e^{-\frac{\log D}{\log z}}+ q D)
\end{align*}
and have completed the $q\leq z$ case of the lemma.

For $q>z$ we use the convolution expression \eqref{sievconv} to identify the Type I bound 
\begin{align*}
\bigr|\sum_{n\in [t-H',t]}\theta_n e\bigl(\frac{rn}{q}\bigr)\bigl|&\leq \sum_{d\leq D} \bigl|\sum_{n \in[(t-H')/d,t/d] }e\bigl(\frac{rnd}{q}\bigr)\bigr|\\
&\leq \sum_{d\leq D}\min \bigl\{\frac{H'}{d}+1,\|\frac{rd}{q}\|^{-1}\bigr \}.
\end{align*}
As described in chapter 13.5 \cite{ik} we can estimate
\begin{align*}
\sum_{d\leq D}\min \{\frac{H'}{d}+1,\|\frac{rd}{q}\|^{-1}\}\ll (D+\frac{H'}{q}+q)\log 2qH'.
\end{align*}
This completes the proof.
\end{proof}

We now choose $\mathcal{T}_\nu^+$. Recall that $Q=(\log Y)^A$. For still unspecified parameter $Y$, $c_\nu$, and $H$ we set $z=Q$, $D=H^{1/10}$ and for $n\in (Y,2Y]$
\begin{align*}
\mathcal{T}_\nu^+(n)=c_\nu \prod_{p\leq z}\bigl(1-1/p)^{-1} \theta_n(D,z).
\end{align*}
Further $\mathcal{T}_\nu^+(n)=0$ for other $n$. Since $\theta_n$ is an upper bound sieve, surely
\begin{align*}
\mathcal{T}_\nu^+(n)\geq 0.
\end{align*}

\begin{lemma}[From a Sieve to $\Lambda_Q$]\label{sievetolambdaq}
Let $\delta>0$ and $Y^\delta<H<Y^{1-\delta}$. It holds that
\begin{align*}
\sup_\alpha \int_{-1/H}^{1/H} \bigl|\widehat{\mathcal{T}_\nu-\mathcal{T}_\nu^+} (\alpha+\beta)\bigr|^2d\beta \ll  Y(\log Y)^{-A}.
\end{align*}

\end{lemma}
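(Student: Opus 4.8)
The plan is to reprise the proof of Lemma~\ref{shorttoLq}, replacing its short character sum input by the two explicit short exponential sum evaluations of Lemmas~\ref{Lqshortnew} and~\ref{Sieveshortnew}, whose main terms will be seen to cancel. First I would dissect $[0,1]$ into Farey arcs $I_{q,r}$ of order $\lfloor H^{1/2}\rfloor$ about the fractions $r/q$ with $(r,q)=1$ and $q\leq H^{1/2}$. As in the proof of Lemma~\ref{shorttoLq}, for any fixed $\alpha$ only $O(1)$ of these arcs meet $[\alpha-1/H,\alpha+1/H]$, so that
\begin{align*}
\sup_\alpha\int_{-1/H}^{1/H}\bigl|\widehat{\mathcal{T}_\nu-\mathcal{T}_\nu^+}(\alpha+\beta)\bigr|^2d\beta\ll\max_{\substack{q\leq H^{1/2}\\ r(q)^*}}\int_{I_{q,r}}\bigl|\widehat{\mathcal{T}_\nu-\mathcal{T}_\nu^+}(\beta)\bigr|^2d\beta,
\end{align*}
and the containment \eqref{Iqrrange} together with Gallagher's Lemma~\ref{GL} bounds the inner integral by
\begin{align*}
\frac{1}{q^2H}\sum_t\Bigl|\sum_{n\in[t-qH^{1/2}/3,\,t]}\bigl(\mathcal{T}_\nu(n)-\mathcal{T}_\nu^+(n)\bigr)e\bigl(\tfrac{rn}{q}\bigr)\Bigr|^2.
\end{align*}
Since $\mathcal{T}_\nu$ and $\mathcal{T}_\nu^+$ are both supported in $(Y,2Y]$ and $qH^{1/2}\leq H<Y$, the sum over $t$ has only $O(Y)$ nonzero terms.

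Now recall $z=Q=(\log Y)^A$, $D=H^{1/10}$, and that on $(Y,2Y]$ we have $\mathcal{T}_\nu=c_\nu\Lambda_Q$ and $\mathcal{T}_\nu^+=c_\nu\prod_{p\leq z}(1-1/p)^{-1}\theta_n(D,z)$, both vanishing elsewhere. Writing $L$ for the length of the window $[t-qH^{1/2}/3,t]\cap(Y,2Y]$, I would evaluate $\sum_n\mathcal{T}_\nu(n)e(rn/q)$ over this window using Lemma~\ref{Lqshortnew} and $\sum_n\mathcal{T}_\nu^+(n)e(rn/q)$ over it using Lemma~\ref{Sieveshortnew}. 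For $q\leq Q$ both evaluations have the same main term $c_\nu\mu(q)L/\varphi(q)$, so these cancel identically for every $t$; for $Q<q\leq H^{1/2}$ both lemmas supply pure error bounds with no main term, so again nothing needs to cancel. What remains is therefore an error term: it is $O\bigl(Q^3+qH^{1/2}e^{-\log D/\log z}+qD\bigr)$ when $q\leq Q$, and $O\bigl(qQ+Q^3+H^{1/2}\log H\bigr)$ when $Q<q\leq H^{1/2}$, where I have used $L\leq qH^{1/2}/3$ and $q,D\leq H^{1/2}$ in Lemma~\ref{Sieveshortnew}.

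Squaring these errors, summing over the $O(Y)$ relevant $t$, and dividing by $q^2H$, we get
\begin{align*}
\frac{1}{q^2H}\sum_t\bigl|\cdots\bigr|^2\ll\begin{cases} Y\bigl(Q^6q^{-2}H^{-1}+D^2H^{-1}+e^{-2\log D/\log z}\bigr) & \text{if } q\leq Q,\\ Y\bigl(Q^2H^{-1}+Q^6q^{-2}H^{-1}+(\log H)^2q^{-2}\bigr) & \text{if } Q<q\leq H^{1/2}.\end{cases}
\end{align*}
Because $H>Y^\delta$ and $D=H^{1/10}$, every term carrying a factor $H^{-1}$ is $\ll Y(\log Y)^{-A}$ with a power of $Y$ to spare; because $q>Q=(\log Y)^A$ in the second range, $Y(\log H)^2q^{-2}\ll Y(\log Y)^{2-2A}\ll Y(\log Y)^{-A}$ once $A\geq2$; and, $D$ being a fixed power of $H$ while $z=(\log Y)^A$, one has $\log D/\log z\gg\log Y/\log\log Y$, so $Ye^{-2\log D/\log z}$ is smaller than $Y(\log Y)^{-A}$ for $Y$ large. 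Taking the maximum over $q$ and $r$ then yields $\sup_\alpha\int_{-1/H}^{1/H}\bigl|\widehat{\mathcal{T}_\nu-\mathcal{T}_\nu^+}(\alpha+\beta)\bigr|^2d\beta\ll Y(\log Y)^{-A}$, which is the claim.

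This is essentially a bookkeeping argument rather than one facing a genuine obstacle. The two points needing care are that the $c_\nu$-normalised main terms of the two structurally quite different objects — the truncated singular series $\Lambda_Q$ and the upper bound $\beta$-sieve divided by $\prod_{p\leq z}(1-1/p)$ — really coincide on each short window (they do, because the choice $z=Q$ ties both to the same Euler product), and that the sieve level $D=H^{1/10}$ is a large enough power of $H$ relative to $z=Q$ for the Fundamental Lemma remainder $e^{-\log D/\log z}$ to beat $(\log Y)^{-A}$. Both are ensured by the choices already made, and the rest of the computation follows the proof of Lemma~\ref{shorttoLq}.
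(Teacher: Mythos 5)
Your proof is correct and follows essentially the same route as the paper: Farey dissection of order $\lfloor H^{1/2}\rfloor$, Gallagher's Lemma to pass to short windows, then Lemmas~\ref{Lqshortnew} and~\ref{Sieveshortnew} with the observation that the $c_\nu\mu(q)L/\varphi(q)$ main terms cancel exactly (including near the boundary of the common support $(Y,2Y]$). You are somewhat more explicit than the paper in separating the $q\leq Q$ and $Q<q\leq H^{1/2}$ cases and in tracking which error terms beat $(\log Y)^{-A}$, but the argument is the same.
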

\begin{proof}
We use the same Farey arc dissection as in the proof of Lemma \ref{shorttoLq}. It thus suffices similar as before to bound
\begin{align*}
\max_{\substack{q\leq H^{1/2}\\ r(q)^*}}\sum_t \frac{1}{q^2 H}\bigl|\sum_{n\in [t-qH^{1/2}/3,t]}\bigl(\mathcal{T}_\nu(n)-\mathcal{T}_\nu^+(n)\bigl)e(\frac{rn}{q}) \bigr|^2.
\end{align*}
By the range of support, we can restrict the summation to $t\in (Y,2Y+qH^{1/2}/3]$. 
We use Lemma \ref{Lqshortnew} and \ref{Sieveshortnew} to obtain  
\begin{align*}
\bigl|\sum_{n\in [t-qH^{1/2}/3,t]}\bigl(\mathcal{T}_\nu(n)-\mathcal{T}_\nu^+(n)\bigl)e(\frac{rn}{q}) \bigr|=O\bigl((qQ+Q^3+qH^{1/2}e^{-\frac{\log D}{\log z}}+qH^{1/10}+\frac{q H^{1/2}}{Q}+q) \log Y\bigr)
\end{align*} 
By the choice of parameters 
\begin{align*}
z&=Q=(\log Y)^A\\
Y^\delta&<H<Y^{1-\delta}\\
D&=H^{1/10}
\end{align*}
and range of $q$ to consider, this means 
\begin{align*}
\bigl|\sum_{n\in [t-qH^{1/2}/3,t]}\bigl(\mathcal{T}_\nu(n)-\mathcal{T}_\nu^+(n)\bigl)e(\frac{rn}{q}) \bigr|=O\bigl(\frac{H^{1/2}q}{(\log Y)^{A-1}}\bigr)
\end{align*}
and so completes the proof.
\end{proof}

With this, we can deduce quickly that $\nu$ is Fourier close to $\mathcal{T}_\nu^+$.
\begin{cor}\label{cor}
Assume the condition of Lemma \ref{shorttoLq}. It holds that
\begin{align*}
\sup_\alpha \int_{-1/H}^{1/H} \bigl|\widehat{\nu-\mathcal{T}_{\nu}^+}(\alpha+\beta
)\bigr|^2 d\beta \ll Y(\log Y)^{-A}.
\end{align*}
\end{cor}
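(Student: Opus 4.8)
The plan is to deduce Corollary \ref{cor} by combining the two Fourier-closeness estimates already established, namely Lemma \ref{shorttoLq} (which gives closeness of $\nu$ to $\mathcal{T}_\nu$) and Lemma \ref{sievetolambdaq} (which gives closeness of $\mathcal{T}_\nu$ to $\mathcal{T}_\nu^+$), via the triangle inequality in $L^2$. The only subtlety is that the triangle inequality must be applied inside the square root, after recognising $\bigl(\int_{-1/H}^{1/H}|\widehat{g}(\alpha+\beta)|^2 d\beta\bigr)^{1/2}$ as an $L^2$ norm of $\widehat{g}$ restricted to a window around $\alpha$; since this window has fixed length $2/H$ independent of $\alpha$, and the supremum over $\alpha$ respects the triangle inequality, everything goes through cleanly.

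Concretely, I would write $\nu-\mathcal{T}_\nu^+ = (\nu - \mathcal{T}_\nu) + (\mathcal{T}_\nu - \mathcal{T}_\nu^+)$, so that $\widehat{\nu-\mathcal{T}_\nu^+}(\alpha+\beta) = \widehat{\nu-\mathcal{T}_\nu}(\alpha+\beta) + \widehat{\mathcal{T}_\nu-\mathcal{T}_\nu^+}(\alpha+\beta)$. Then for each fixed $\alpha$, applying Minkowski's inequality to the $L^2(d\beta)$ norm over $[-1/H,1/H]$ gives
\begin{align*}
\Bigl(\int_{-1/H}^{1/H}\bigl|\widehat{\nu-\mathcal{T}_\nu^+}(\alpha+\beta)\bigr|^2 d\beta\Bigr)^{1/2}
&\leq \Bigl(\int_{-1/H}^{1/H}\bigl|\widehat{\nu-\mathcal{T}_\nu}(\alpha+\beta)\bigr|^2 d\beta\Bigr)^{1/2}
+ \Bigl(\int_{-1/H}^{1/H}\bigl|\widehat{\mathcal{T}_\nu-\mathcal{T}_\nu^+}(\alpha+\beta)\bigr|^2 d\beta\Bigr)^{1/2}.
\end{align*}
Taking the supremum over $\alpha$ on both sides and using $\sup(f+g)\leq \sup f + \sup g$, I bound the right-hand side by the sum of the square roots of the quantities controlled in Lemma \ref{shorttoLq} and Lemma \ref{sievetolambdaq}, each of which is $\ll Y(\log Y)^{-A}$. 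Hence the sum is $\ll (Y(\log Y)^{-A})^{1/2}$, and squaring gives $\sup_\alpha \int_{-1/H}^{1/H}|\widehat{\nu-\mathcal{T}_\nu^+}(\alpha+\beta)|^2 d\beta \ll Y(\log Y)^{-A}$, as claimed. (If one wants to be slightly more careful about constants one uses $(x+y)^2 \leq 2x^2+2y^2$, which only affects the implied constant.)

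There is essentially no obstacle here — this corollary is a pure bookkeeping step that packages Lemma \ref{shorttoLq} and Lemma \ref{sievetolambdaq} into the single hypothesis \eqref{propeq3} needed to feed Proposition \ref{prop}. The one thing to be careful about is that Lemma \ref{shorttoLq} is stated under a list of hypotheses on $\nu$ (support on $H^{1/2}$-rough numbers, the $L^2$ bound \eqref{nuest}, and the short character sum condition \eqref{nushort1}), so the corollary must inherit exactly those hypotheses — which is why it is phrased as "Assume the condition of Lemma \ref{shorttoLq}." Lemma \ref{sievetolambdaq} needs only $Y^\delta < H < Y^{1-\delta}$, which is already part of that condition, so no extra assumption is required. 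Note also that $\|\nu\|_2^2 \ll Y(\log Y)^B$ with $B \le A-1$, so the bound $Y(\log Y)^{-A}$ is what one wants (rather than a bound against $\|\nu\|_2^2$ directly); if one instead wanted the cleaner form $\ll (\log Y)^{-A'}\|\nu\|_2^2$ appearing in \eqref{nuposimodel}, one simply absorbs the logarithmic factors by replacing $A$ with $A+B$ throughout, which is harmless since $A$ is an arbitrary fixed constant.
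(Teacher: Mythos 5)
Your proof is correct and matches the paper's approach: the paper's own proof is a single sentence invoking Lemma \ref{shorttoLq}, Lemma \ref{sievetolambdaq}, and the triangle inequality, and you have simply written out the Minkowski/triangle-inequality step in full detail. The additional remarks about inheriting the hypotheses and absorbing logarithmic factors are accurate and harmless.
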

\begin{proof}
The statement follows from the Lemma \ref{shorttoLq}, Lemma \ref{sievetolambdaq}, and the triangle inequality.
\end{proof}

\section{Proof of Theorem \ref{MT}}
The proof of our main theorem is done by applying Proposition \ref{prop} with suitable functions $\nu$ and $\omega$. Here the function $\nu$ fulfills the conditions of Lemma \ref{shorttoLq}. The functions  are based on constructions by Harman, we summarize their properties in the following Lemma. We recall that $A>0$ and $Q=(\log Y)^A$. 

\begin{lemma}\label{importlemma}
Let $\epsilon>0$ and $X$ be sufficiently large. Set $Y=X^{21/40+\epsilon}$ and $H=Y^{1/9+2\epsilon}$. There exist functions $\nu,\omega: \mathbb{N}\to \mathbb{R}$ with the following properties. 
\begin{itemize}
\item $\nu$ is supported on $(Y,2Y]$ and $\omega$ on $(X-3Y,X-Y]$.
\item We have $\nu(n)\leq \Lambda'(n)$ and $\omega\leq \Lambda'(n)$ for all $n$.
\item $\nu$ fulfills the conditions of Lemma \ref{shorttoLq} for some $c_\nu>0.05$.
\item $\omega(n)$ is supported on $Q$-rough numbers only and there exists a constant $c_\omega>0.09$ such that we have for any $t\leq X$ and any character $\chi(q)$ with $q\leq Q$ that
\begin{align}\label{rhocondition}
\sum_{\substack{n\in (X-3Y,X-Y]\\ n\leq t }}\Bigl(\omega(n)\chi(n)-c_\omega \delta_\chi\Bigr)\ll Y(\log Y)^{-2A}.
\end{align}
\item The coefficients are bounded by $\nu(n)\ll d(n)^{O(1)}\log n$ and $\omega(n)\ll d(n)^{O(1)}\log n$, where $d(n)$ denotes the number of divisors of $n$. 
\end{itemize}
\end{lemma}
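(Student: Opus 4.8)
The plan is to obtain $\nu$ and $\omega$ by re-packaging the minorant constructions of Chapter~10 of \cite{harb}; this lemma introduces no new analytic input, so the task is to extract from Harman's arguments exactly the statements listed and to confirm the numerical densities.

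For $\nu$ I would take Harman's lower-bound approximation to the weighted primes on $(Y,2Y]$. By construction it satisfies $0\le\nu(n)\le\Lambda'(n)$ after the usual truncations, and since $Y$ is far larger than $H^{1/2}=Y^{1/18+2\epsilon}$, every prime in $(Y,2Y]$ is automatically $H^{1/2}$-rough, while the first sifting level in the sieve can be taken $\ge H^{1/2}$, so the required roughness is built in. The bound $\nu(n)\ll d(n)^{O(1)}\log n$ is visible from the shape of the sieve weights, and inserting it into the standard moment $\sum_{n\le 2Y}d(n)^{O(1)}\ll Y(\log Y)^{O(1)}$ yields \eqref{nuest} with a fixed exponent, hence with $B\le A-1$ once $A$ is large. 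The crucial point is \eqref{nushort1}: this is precisely the short character-sum estimate Harman establishes (equation~(10.2.8) of Theorem~10.2 in \cite{harb}), where the Type~I sums are elementary and the Type~II sums are controlled by mean-value and large-value estimates for Dirichlet polynomials together with the zero-density information available exactly in the range $H>X^{7/120+\epsilon}$; the admissible exceptional set $E_q$ of size $O(q^{1/2}(\log Y)^{-2A})$ is what that argument delivers. The constant $c_\nu$ is the value of Harman's sieving integral for this decomposition, and tracking his numerics gives $c_\nu>0.05$.

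For $\omega$ I would use that the interval $(X-3Y,X-Y]$ has length comparable to $Y=X^{21/40+\epsilon}$, so Baker--Harman--Pintz \cite{bhp} guarantee primes in it and Harman's Theorem~10.8 in \cite{harb} provides a minorant $\omega(n)\le\Lambda'(n)$ on this interval together with its distribution in residue classes to moduli $q\le Q$. Restricting $\omega$ to $Q$-rough numbers costs nothing: the primes up to $Q$ carry total mass $O(Q)$ and every prime exceeding $Q$ is $Q$-rough. Because $Q=(\log Y)^A$ is minuscule next to $Y$, a Fundamental Lemma converts the arithmetic-progression information into the unaveraged bound \eqref{rhocondition}, the error being a harmless Type~I term. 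Again $\omega(n)\ll d(n)^{O(1)}\log n$ by construction, and Harman's numerics give $c_\omega>0.09$.

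It then remains to check the parameters: with $Y=X^{21/40+\epsilon}$ and $H=Y^{1/9+2\epsilon}$ one has $H=X^{7/120+\epsilon'}$ for some $\epsilon'>0$, since $(21/40)(1/9)=7/120$, so all of Harman's estimates apply in the claimed range, while $Y^\delta<H<Y^{1-\delta}$ for $\delta$ small, as needed to invoke Lemmas~\ref{shorttoLq} and~\ref{sievetolambdaq}. The only genuine obstacle is the bookkeeping above: one must verify that Harman's~(10.2.8) really holds with the uniformity in $q$ and the size of $E_q$ demanded by \eqref{nushort1}, and that the sieve densities $c_\nu,c_\omega$ clear the stated thresholds --- a matter of careful reading of Chapter~10 of \cite{harb} rather than of new mathematics.
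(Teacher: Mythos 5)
Your proposal follows the paper's route exactly: quote Harman's Buchstab-iterated minorant from section 10.6 of \cite{harb} for $\nu$ (with the short character-sum input from (10.2.8)), quote Theorem 10.8 and the Chapter 7 construction for $\omega$, observe that the $H^{1/2}$-roughness of $\nu$ comes for free from the sifting level, and do the parameter arithmetic $(21/40)(1/9)=7/120$. Two points deserve correction. First, you assert $0\le\nu(n)\le\Lambda'(n)$; this is false for Harman's minorant, which genuinely takes negative values, and indeed the entire raison d'\^etre of the nonnegative model $\mathcal{T}_\nu^+$ in this paper is precisely that $\nu$ is \emph{not} nonnegative. The lemma only claims $\nu\le\Lambda'$, so your overall argument survives, but the slip suggests a misreading of the sign structure the paper is built around. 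Second, your derivation of \eqref{rhocondition} is looser than the paper's: you speak of ``restricting $\omega$ to $Q$-rough numbers'' and invoking ``a Fundamental Lemma,'' implicitly treating $\omega$ as a prime indicator; in fact $\omega$ is the full sieve minorant, its $Q$-rough support is automatic from the initial Buchstab step (7.4.2), and the character-sum bound comes by replacing Lemma 7.7 with Lemma 10.31 inside Harman's section 7.5 decomposition (incurring no loss of constants because $q\le(\log Y)^A$), not from a Fundamental-Lemma argument.
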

\begin{proof}
Our choice of $\nu$ is based on Harman's results in section 10.6 \cite{harb}. More precisely, the process is based on the description on page 198 \cite{harb} in which by successive Buchstab iterations and discarding of parts a minorant for the primes is constructed. We obtain $\nu$ by multiplying it by $\log Y$. Our different choice of $Y$ does not affect his construction, since it only depends on the relative size of $H$ and $Y$ that remains unchanged. In this way, we get the character sum bound \eqref{nushort1} in the same way as Harman, see his condition in equation (10.2.8). Furthermore Harman's Buchstab constructions produce coefficients that are bound by some power of the divisor function. So we have 
\begin{align*}
\nu(n)\ll d(n)^{O(1)}\log n
\end{align*}
and so also get \eqref{nuest} for some finite $B$. We remark that Harman does not state the condition that $\nu$ should be supported on $H^{1/2}$-rough numbers only. However, he implicitly uses it and indeed it follows from the construction on page 198 by observing that the variable $z=Y^{1/9}$ appearing there is larger than $H^{1/2}=Y^{1/18}$.

The existence of $\omega$ is implied in Harman Theorem 10.8. To be more precise, we use Harman's construction of a minorant for the primes described in sections 7.4 to 7.9 in \cite{harb}. As interval we use $(X-3Y,X]$ and set $\omega$ to be $\log (X-3Y)$ times Harman's function. His construction starts with the Buchstab decomposition given in (7.4.2) and is completed by some intricate further decompositions for $\sum_3$ given throughout section 7.9. The coefficient bound is clear by the use of finitely many Buchstab decompositions. Finally condition \eqref{rhocondition} follows, since we can introduce characters by replacing Lemma 7.7 by Lemma 10.31 in section 7.5. In contrast to the construction of $\nu$, this does not incur any loss of constants, since $q\leq (\log Y)^A$. The introduction of characters is done in the same way by Harman in section 10.5, see the condition in equation (10.2.6). 
\end{proof}

With this, we have gathered all necessary tools for the final proof.

\begin{proof}[Proof of Theorem \ref{MT}]
Let $X$ be large, $Y$ and $H$ as in Lemma \ref{importlemma}. Note that this means for some $\epsilon'>0$ that $H=Y^{1/9+2\epsilon}=X^{7/120+\epsilon'}$, as stated in the theorem. 

We apply proposition \ref{prop} with
\begin{align*}
a(n)=b(n)=\Lambda'(n)
\end{align*}
and $\omega, \nu$ as given by Lemma \ref{importlemma}. By Lemma \ref{sievetolambdaq} and Corollary \ref{cor} the Fourier closeness conditions \eqref{propeq3} and \eqref{propeq4} hold with $\theta=(\log Y)^{-A-1}$. We can use the coefficient bounds for $\omega$ and $\nu$ to bound
\begin{align*}
\|\omega\|_2^2&\ll Y (\log Y)^{O(1)}\\
\|\nu\|_2^2 &\ll Y (\log Y)^{O(1)}.
\end{align*}
Similar bounds for $\|\mathcal{T}_\nu\|_2^2$ and $\|\mathcal{T}_\nu^+\|_2^2$ follow from their construction.

We choose $\kappa=Y/\log Y$. For any $A'>0$, after choosing $A$ sufficiently large, we obtain the following. We have for all $n\in [X-H,X]$ with at most
\begin{align*}
O\bigl(H(\log X)^{-A'}\bigr)
\end{align*}
exceptions that
\begin{align*}
\Lambda'*\Lambda'(n)\geq \omega*\mathcal{T}_\nu(n)+O(\frac{Y}{\log Y}).
\end{align*}

We are left with showing that $\omega*\mathcal{T}_\nu(n)\gg Y$ for even $n\in [X-H,X]$. We have
\begin{align*}
\omega*\mathcal{T}_\nu(n)&=\sum_{n_1\leq n}\omega(n_1)\mathcal{T}_\nu(n-n_1) \\
&=c_\nu  \sum_{n_1\in [n-Y,n-2Y]}\sum_{q\leq Q}\frac{\mu(q)}{\varphi(q)}\sum_{a(q)^*}e\bigl(\frac{a(n-n_1)}{q}\bigr)\omega(n_1)\\
&=\sum_{q\leq Q}\frac{\mu(q)}{\varphi(q)}\sum_{a(q)^*}e\bigl(\frac{an}{q}\bigr)\sum_{n_1\in [n-Y,n-2Y]}e\bigl(\frac{-an_1}{q}\bigr)\omega(n_1).
\end{align*}
Since $\omega$ is supported on $Q$-rough numbers only, we can apply the same Ramanujan sum strategy as in the proof of Lemma \ref{shorttoLq}. We have
\begin{align*}
\sum_{n_1\in [n-Y,n-2Y]}e\bigl(\frac{-an_1}{q}\bigr)\omega(n_1)=\frac{\mu(q)}{\varphi(q)}c_\omega Y+ E(n,q,a),
\end{align*}
where 
\begin{align*}
E(n,q,a)&=\sum_{\chi(q)} \frac{\tau(\overline{\chi}(q))\chi(-a)}{\varphi(q)}\sum_{n_1\in [n-Y,n-2Y]}\bigl(\omega(n)\chi(n_1)-\delta_\chi c_\omega\bigr).
\end{align*}
By condition \eqref{rhocondition} we get
\begin{align*}
\sum_{\chi(q)} \frac{\tau(\overline{\chi}(q))\chi(-a)}{\varphi(q)}\sum_{n_1\in [n-Y,n-2Y]}\bigl(\omega(n)\chi(n_1)-\delta_\chi c_\omega\bigr)
&\leq \sqrt{q} \max_\chi \bigl|\sum_{n_1\in [n-Y,n-2Y]}\bigl(\omega(n)\chi(n_1)-\delta_\chi c_\omega\bigr)| \\
&\ll Y(\log Y)^{-A/2}.
\end{align*}
Consequently, if $A$ is large enough in terms of $A'$ we have
\begin{align*}
\omega*\mathcal{T}_\nu(n)&= c_\omega c_\nu Y \sum_{q\leq Q}\frac{|\mu(q)|}{\varphi(q)^2}\sum_{a(q)^*}e\bigl(\frac{an}{q}\bigr)+O(Y(\log Y)^{-A'}).
\end{align*}
The singular series can be completed in standard fashion (see e.g. page 208 \cite{harb}) and we get outside of a sufficiently small exceptional set,
\begin{align*}
\omega*\mathcal{T}_\nu(n)&= c_\omega c_\nu  Y \mathfrak{S}(n)+O(Y(\log Y)^{-A'}),
\end{align*}
where
\begin{align*}
\mathfrak{S}(n)=\sum_{q=1}^\infty \frac{|\mu(q)|}{\varphi(q)^2}\sum_{a(q)^*}e\bigl(\frac{an}{q}\bigr)
\end{align*}
and for even $n$ we have uniformly $\mathfrak{S}(n)\gg 1$. This completes the proof.
\end{proof}

\newpage

\Addresses

\end{document}